\newtheorem{theorem}{Theorem}[section]
\newtheorem{lemma}[theorem]{Lemma}
\newtheorem{condition}[theorem]{Condition}
\newtheorem{assumption}[theorem]{Assumption}
\newtheorem{remark}[theorem]{Remark}
\newcommand{\defn}{:=}
\newcommand{\one}{{\bf 1}}
\def\bbr{\mathbb{R}}
\def\vep{\varepsilon}
\def\reals{\mathbb{R}}
\def\bbz{\mathbb{Z}}
\def\argmin{\text{argmin}}
\numberwithin{equation}{section}
\begin{document}

\author[S. Ghosh]{Souvik Ghosh}
\address{Department of Statistics \\
Columbia University \\
1255 Amsterdam Avenue, 1011 SSW \\
New York, NY 10027}
\email{ghosh@stat.columbia.edu}
\author[G. Samorodnitsky]{Gennady Samorodnitsky}
\address{School of Operations Research and Information Engineering\\
and Department of Statistical Science \\
Cornell University \\
Ithaca, NY 14853}
\email{gennady@orie.cornell.edu}
%\date{\today}
\title[Long Strange Segments and Ruin Probabilities]{Long Strange
  Segments, Ruin Probabilities and the  Effect of Memory on   Moving
  Average Processes}  

\thanks{Research partially supported by  NSA grant MSPF-05G-049, ARO
grant W911NF-07-1-0078 and NSF training grant ``Graduate and Postdoctoral
Training in Probability and Its Applications''  at Cornell University. Souvik Ghosh's research was also partially supported by the Faculty Research Allowance Program at Columbia University.}

\begin{abstract}
We obtain the rate of growth of long strange segments and the rate of
decay of 
infinite horizon ruin probabilities for a class of infinite moving
average processes with exponentially light tails. The rates are
computed explicitly. We show that the rates are very similar to those
of an i.i.d. process as long as the moving average coefficients decay fast
enough. If they  
do not, then the rates are significantly different. This demonstrates
the change in the length of memory in a moving average process
associated with certain changes in the rate of decay of the
coefficients. 
\end{abstract}

\maketitle

\begin{section}{Introduction} \label{sec:intro}

How does the length of memory in a stationary stochastic process
affect the 
behavior of important characteristics of the process such as the rate of
increase of the \emph{long  strange segments} and the rate of decay
of the \emph{ruin  probabilities}?  From a different point of view:
can one use such important characteristics of a stationary process to
tell whether or not the process has long memory. In this paper such
questions are discussed  for a class of $\bbr^d$-valued 
infinite moving average processes with exponentially light
tails. These are processes of the form 
\begin{equation} \label{maproc}
    	X_n=\sum_{i\in\mathbb{Z}}\phi_i Z_{n-i}, \  n\in \mathbb{Z}, 
\end{equation}
where $(Z_i,i\in \mathbb{Z})$ are i.i.d., centered, random vectors
taking values in $\mathbb{R}^d$. We assume existence of some
exponential moments, i.e. 
\[
	\text{there exists}\  \epsilon>0 \mbox{ such that }
	\Lambda(t):=\log E \left[ e^{t Z_0}  \right]<\infty\ \text{for
	all } t\in \mathbb{R}^{ d} \mbox{ with }	|t|<\epsilon. 
\]

Such a process, also known as a linear process (see
\cite{brockwell:davis:1991}), is well defined  if the coefficients are
square summable: 
\begin{equation}\sum_{i=-\infty}^\infty
   \phi_i^2<\infty.
\end{equation} 
If the stronger condition of absolute summability of the coefficients
holds, namely 
\begin{equation}\label{absum}
	\sum_{i\in\mathbb{Z}}|\phi_i|<\infty,
\end{equation}
then it is often said that  the process has short memory. This is mainly
because 
the covariances of the process are summable in this case, and
a process with absolutely summable covariances is often considered to
have short memory, see e.g. \cite{samorodnitsky:2006LRD}.  What about
other characteristics of a process, that are often more informative
than covariances? 

In a recent
article \cite{ghosh:samorodnitsky:2009} gave a complete picture of functional 
large, moderate and huge deviations for the moving average process and
discussed the effect of memory on them. In this paper we follow up by
obtaining the rate of growth of long strange segments and the rate of  decay
of the ruin  probabilities for the  moving average processes. We
consider two cases: one where the coefficients of the process are
absolutely summable, i.e. \eqref{absum} holds, and the other when
\eqref{absum} fails and the coefficients are balanced regularly
varying. We show that the rates are significantly  
different in these two cases. We view these results as showing the
effect of memory as well 
as indicating that the processes with absolutely summable coefficients
can be legitimately called short memory processes, while the
alternative family of processes can be legitimately viewed as a family
of long memory processes. 

We now define precisely that characteristics of a process that we will
study in this paper. Suppose that  $(X_n,n\in \mathbb{Z})$ is  a zero
mean  $\mathbb{R}^d-$valued,
stationary and ergodic stochastic process. Given any measurable set
$A\subset \mathbb{R}^d$, the lengths of the long strange segments are
random variables, defined as  
$$
	R_n(A)\defn \sup \Big\{j-i:0\le i<j\le n, \frac{S_j-S_i}{j-i}
        \in A\Big\}\,,  
$$
where $S_k=X_1+\cdots+X_k$ are the partial sums. That is, $R_n(A)$ is
the maximum length of a segment from the first $n$ observations whose 
average is in $A$. 
To understand the justification for the name long strange segments,
consider  any set $A$ bounded away from the origin  (that is  $0\notin
\bar A,$ where $\bar A$ is the closure of $A$.) Since the process is
ergodic, we would not expect the average value of the process
over a long time segment to be 
in $A,$ and it is strange if that happens.   If we use the process to
model a system, then the long strange segments are the time
intervals where the system runs at a different ``rate'' than 
anticipated, and it is of obvious interest to know how long such
strange intervals could be. 

The easiest way to see the connection between the long strange
segments and large deviations is by defining 
$$
	T_n(A)\defn \inf\Big\{l:\ \text{there exists}\  k,0\le k\le
        l-n,\frac{S_l-S_k}{l-k}\in A \Big\}; 
$$
$T_n(A)$ is the minimum number of observations required to have a
segment of length at least $n$, whose average is in the set $A$. It is
elementary to check that there is a duality relation between
the rate of growth of $T_n$ and the rate of growth of $R_n.$
Furthermore,  for any sequence $(X_n)$ of random vectors, 
\begin{equation}\label{iid1}
	- \limsup_{ n \rightarrow \infty} \frac{1}{n}\log P \left[
          S_n/n\in A  \right]  \le \liminf_{ n \rightarrow \infty}
        \frac{1}{n}\log T_n(A), \ \text{P-a.s.}
\end{equation}
and, if  $(X_n)$ are i.i.d., then also 
\begin{equation}\label{iid2}
	- \liminf_{ n \rightarrow \infty} \frac{1}{n}\log P \left[
          S_n/n\in A  \right]   \ge \limsup_{ n \rightarrow \infty}
        \frac{1}{n}\log T_n(A), \ \text{P-a.s.;}
\end{equation}
see e.g. Theorem 3.2.1 in \cite{dembo:zeitouni:1998}. In Section
\ref{sec:lrs} we exploit the connection between a general version of
long strange segments and large deviations to establish the rate of
growth of the 
long strange segments for the two classes of moving average processes
we are considering. We will observe a marked change (or
a phase transition) in the rate of growth when switching from one
family of moving averages to the other. 

The relations of the form \eqref{iid1} and \eqref{iid2}   are referred
to as the Erd\"os-R\'enyi law; \cite{ErdRen:1970kx} proved asymptotics
for longest head runs in i.i.d. coin tosses. See
\cite{Gordon:1986p6959}, \cite{Arratia:1990p6736},
\cite{Novak:1992p6961}, \cite{gantert1998functional}  and
\cite{Vaggelatou:2003p6942} and the references therein for versions on
this result under various Markov chain settings.

We mention at this point that a different case of this problem 
was considered in
\cite{mansfield:rachev:samorodnitsky:2001} and
\cite{rachev:samorodnitsky:2001}, where the assumption of
certain finite exponential moments was replaced by the assumption of
balanced regular varying tails with exponent $-\beta<-1$. These papers
consider  
linear processes as in \eqref{maproc} in dimension $d=1$.  In particular,
\cite{mansfield:rachev:samorodnitsky:2001} showed that  if
\eqref{absum} holds, then for any $y>0$ and $x>0$
\begin{equation} 
P\big(a_n^{-1} R_n((y,\infty))\le x\big)\rightarrow
\exp(-C_sy^{-\beta} x^{-\beta}) 
\end{equation}
where   $(a_{ n})$ is a sequence  that does not depend on the moving
average coefficients, and it is 
regular varying at infinity with index $\beta^{-1}$
(see \cite{resnick:1987} or \cite{bingham:goldie:teugels:1987} for
details on regular variation). On the other hand, $C_s>0$ is a constant, which may depend
on the moving average coefficients. This rate of growth $a_n$ of the
long strange segments is the same as in the i.i.d. case, that results
when  choosing $\phi_0=1$ and $\phi_i=0$ for all  $i\neq 0$. In the
subsequent paper  
\cite{rachev:samorodnitsky:2001} considered the case when
(\ref{absum}) fails to hold, but the coefficients $(\phi_i)$ are 
balanced regular varying at infinity with exponent $-\alpha,$
satisfying $\max\Big\{ \frac{1}{\beta},\frac{1}{2}\Big\}<\alpha\le
1$. This means that there is a nonnegative function $\psi$ with 
\begin{equation}\label{longmemory}
 \psi\in RV_{-\alpha}, \mbox{ such that
  }\frac{\phi_n}{\psi(n)}\rightarrow p, \mbox{
  }\frac{\phi_{-n}}{\psi(n)} \rightarrow 1-p, \text{as $n\to\infty$}
\end{equation}
for some $0\leq p\leq 1$. 
Under this assumption,  for any $y>0$ and $x>0$,
\begin{equation}
P\big( b_n^{-1}R_n((y,\infty))\le x \big)\rightarrow
\exp(C_ly^{-\beta}x^{-\beta\alpha}), 
\end{equation}
for some sequence $(b_n)\in RV_{(\alpha\beta)^{-1}}$. Therefore, the
long strange segments now grow at the higher rate $(b_n)$. This phase
transition be taken as the evidence of long range dependence in the
moving average process under the regular variation \eqref{longmemory}
of the coefficients. A similar phenomenon can be observed in Section
\ref{sec:lrs} of the present paper.

The second topic that we consider in this article is that of the 
\emph{ruin probabilities}. If $(Y_n)$ is an $\bbr^d$-valued stochastic
process, and $A$ a measurable set in $\bbr^d$, an infinite horizon
ruin probability is a probability of the type 
\begin{equation} \label{e:ruin.1dim}
\rho(u; A)=\rho(u)=P \left[ Y_n\in uA, \mbox{ for some }n\ge 1  \right]. 
\end{equation}
The name ``ruin probability'' derives from the one-dimensional case
with $A=(1,\infty)$: if we interpret  $Y_n$ as 
 the total losses incurred by a company until time $n$, and $u$ is the
 initial capital of the firm, then  the event in \eqref{e:ruin.1dim}
 is the event that the
 company eventually goes bankrupt. Probabilities of the type are of
 interest in queuing theory as well; see e.g. \cite{asmussen:2003}.  

In the context of moving average processes, we will define 
\begin{equation}\label{eq:ruin}
	Y_n=\sum_{i=1}^nX_i-a_{ n}\mu,
\end{equation}
for some $\mu\in \bbr^d$, a sequence $ (a_{ n})$ increasing to $
\infty$, with 
$(X_{ n})$ the infinite moving average process \eqref{maproc}.  The
classical  Cram\'er-Lundberg Theory (see e.g. Section XIII.5 in 
\cite{asmussen:2003}) says that, in dimension $d=1$,  
if $ (X_{ n})$ are i.i.d., 
and $ (a_{ n})$ is a linear sequence then (under an additional
condition) there exist positive constants $ c$ and $ \theta$ such that  
\begin{equation} \label{e:cramer}
 	\rho(u)\sim   c e^{ -\theta u} \ \ \ \ \mbox{ as } u \to \infty.
\end{equation}
This  result was later extended by \cite{gerber:1982} to the situation
where $(X_{ n})$  an ARMA($ p,q $) process satisfying certain
assumptions, including that of bounded innovations, and
\cite{promislow:1991} has a further extension to certain infinite
moving average processes while  removing the assumption of the
boundedness of the innovations. In all these cases (\ref{absum}), which
we regard as a short memory case is assumed to hold (in fact, much 
stronger assumptions are needed). 

A weaker version of the estimate \eqref{e:cramer} is the logarithmic
scale estimate
\begin{equation} \label{e:log.cramer}
\lim_{u\to\infty}\frac1u \log\rho(u) = -\theta\,.
\end{equation}
Such results were derived in \cite{nyrhinen:1994,nyrhinen:1995} in a
fairly great generality in the one dimensional case. When specified to
the moving average case, in order to give a non-trivial limit, 
these results require, once again, absolute 
summability of the coefficients. 

There have been other recent studies of ruin probabilities for certain 
stationary increment processes with long memory. The papers 
\cite{husler:piterbarg;2004} and \cite{husler:piterbarg;2008} analyzed
the (continuous time) ruin  probability where the increment process was a version
of the fractional Gaussian noise. Further, \cite{barbe2008elf}
also obtained a 
logarithmic form of ruin probability asymptotics, as in \eqref{e:log.cramer}, under
the assumption that the increment process is the classical Fractional 
ARIMA process or belongs to a class of related processes.

%In the only previous results, known to us, that considered ruin
%probabilities with the stationary increment process of the
%total losses having long memory, was when the increment process was a version of the
%Fractional Gaussian noise; see
%e.g. \cite{husler:piterbarg;2004,husler:piterbarg;2008} (in the
%continuous time setting). 

In this paper we solve the logarithmic scale ruin problem
\eqref{e:log.cramer} when the increment  process $(X_{ n})$ in
\eqref{eq:ruin} is the infinite moving average process. We present a
fairly complete picture. Namely, we prove results both in the short
memory case (when (\ref{absum}) holds), and in the long memory case,
under the assumption of balanced regularly varying coefficients. We
allow a very broad class of  drift sequences $ (a_{ n})$. Ruin
probabilities are also related to large deviations, but not as
directly as the long strange segments. We use a
combination of multiple techniques, but  the large
deviation principle for the moving average process proved in
\cite{ghosh:samorodnitsky:2009} still plays an important role.  The
techniques we use here can 
modified for other, and more general, classes of stationary
processes but we do not make any such attempt in this paper. We
present the results and their 
proofs in Section \ref{sec:ruin} and in the process we clearly
demonstrate the effect of memory in the process $(X_{ n})$ on the rate
of the decay of the ruin probability $ \rho(u)$. The Appendix contains
a multivariate extension of the estimates in \cite{nyrhinen:1994} that
are not restricted to moving average processes. 
\end{section}

\begin{section}{Long Strange Segments}\label{sec:lrs}

Let $(X_n,n\in \mathbb{Z})$ be a $\mathbb{R}^d$-valued, centered
stationary infinite moving average  process \eqref{maproc} 
 defined on a probability space $(\Omega,\mathcal{F},P)$, and let
 $(S_n)$ be its partial sum process. 
  In this section we discuss the
 rate of growth of a general version of the length of the long strange
 segments, which we define as follows. For a  sequence
$\underline{a}=(a_n)$ increasing to infinity and a  measurable set
 $A\subset\mathbb{R}^d$, we define 
\begin{equation} \label{e:str.seg.gen}
R_m(A;\underline{a})\defn \sup\Big\{ n:\, \frac{S_l-S_{l-n}}{a_{n}}\in
A \ \text{for some $l=n,\ldots ,m$}\Big\}
\end{equation}
and the ``dual characteristic'' 
\begin{equation} \label{Trgen} 
	T_r(A;\underline{a})\defn \inf\Big\{l:\, \text{there exists}
	\ k,0\le k\le l-r,\frac{S_l-S_k}{a_{l-k}}\in A \Big\}. 
\end{equation}
Notice that $\{ R_m(A;\underline{a})\geq r\}$ if and only if $\{
T_r(A;\underline{a})\leq m\}$. 
We will often refer to $R_m(A;\underline{a})$ as $R_m$ and to
$T_r(A;\underline{a})$  as $T_r$, as long as the set $A$  and the
sequence $(a_n)$ under consideration are obvious. 

The assumptions and results below use the following notion of balanced
regular variation on $\bbr^d$.

A function
$f:\mathbb{R}^d\rightarrow \mathbb{R}$ is said to be  \emph{balanced
  regular varying} with exponent $\beta>0$, if there exists a
non-negative bounded function $\zeta_f$
defined on the unit sphere on $\mathbb{R}^d$ and a function
$\tau_f:(0,\infty)\rightarrow   (0,\infty)$ satisfying  
\begin{equation}\label{bregvar1}
\lim_{t\rightarrow \infty}\frac{\tau_f(tx)}{\tau_f(t)}=x^\beta
\end{equation} 
for all $x>0$ (i.e. $\tau_f$ is regularly varying with exponent
$\beta$)  and such that for any $(\lambda_t)\subset \mathbb{R}^d$ with
$|\lambda_t|=1$ for all $t$, converging to $\lambda$, 
\begin{equation}\label{bregvar2} 
\lim_{t\rightarrow \infty}\frac{
    f(t\lambda_t)}{\tau_f(t)}=\zeta_f(\lambda).
\end{equation} 
The subscript $f$ will typically be omitted if doing so is  unlikely to
cause confusion. 

Next, we state the specific assumptions on the moving average
process, the normalizing sequence $(a_n)$ in \eqref{e:str.seg.gen} and
\eqref{Trgen}, the resulting large deviations rate
sequence $(b_n)$, and the noise variables. We will consider two
different situations, corresponding to what we view as a short memory
moving average, when the coefficients in \eqref{maproc} decay fast,
and a long memory
moving average, when the coefficients in \eqref{maproc} decay
slowly. The Assumptions 2.1 and 2.2 below  correspond, roughly, to
Assumptions 2.1 and 2.3 in \cite{ghosh:samorodnitsky:2009}, respectively. 

We start with the assumptions describing the short memory
case. Throughout this paper we use $ \Lambda (\cdot)$ to denote the
log-moment generating function of the i.i.d. innovations $ (Z_{ i})$: 
\[ 
 	\Lambda (t):= \log E \big[  e^{ t Z_{ 0}} \big]\,,   
\] 
and by $ \mathcal{F}_{ \Lambda }\subset \mathbb{R}^{ d}$ we denote the
set where $ \Lambda(\cdot)$   is finite: 
\[ 
 	\mathcal{F}_{ \Lambda }=\{   t: \Lambda (t)< \infty  \}.   
\]
Furthermore, for any set $ A$, $ A^{ \circ}$ and $ \bar A$ denote the
interior and closure of $ A$, respectively.

\begin{assumption} \label{ass:short}
All the scenarios below assume that 
\begin{equation}\label{cabsum}
\sum_{i\in \mathbb{Z}}|\phi_i|<\infty
  \mbox{ and }\sum_{i\in \mathbb{Z}}\phi_i=1.
\end{equation} 
\begin{enumerate}[$S1.$]
\item $a_n=n,0\in \mathcal{F}_\Lambda^\circ$ and $b_n=n$.
\item $a_n=n, \mathcal{F}_\Lambda=\mathbb{R}^d$ and $b_n=n$.
\item $a_n/\sqrt{n\log n}\rightarrow \infty, \ a_n/n\rightarrow 0, \ 0\in
  \mathcal{F}_\Lambda^\circ$ and $(b_n)$ an increasing positive
  sequence such that $b_n\sim a_n^2/n$ as $n\to\infty$. 
\item $a_n/n\rightarrow \infty$, $\Lambda(\cdot)$ is balanced regular
  varying with exponent $\beta>1$ and $(b_n)$ an increasing positive
  sequence such that $b_n\sim n\tau(c_n)$, where  
\begin{equation}\label{cn} c_n=\sup \{x:\tau(x)/x\le
  a_n/n\}.
\end{equation} 
\end{enumerate}
\end{assumption}

The next assumption describes the long memory case. 
\begin{assumption}\label{regularvarying} \label{ass:long}
All the scenarios assume that the coefficients $(\phi_i)$ are
  balanced   regular varying with exponent $-\alpha,1/2<\alpha\le 1$
  and   $\sum\limits_{i=-\infty}^\infty|\phi_i|=\infty$. Specifically,
 we assume that \eqref{longmemory} holds for $\alpha$ in this range. 
Let $\Psi_n\defn \sum_{1\le i\le n}\psi(i)$, where once again, $ \psi(\cdot)$ is as  in \eqref{longmemory}. 
\begin{enumerate}[$R1.$]
\item $a_n=n\Psi_n,0\in   \mathcal{F}_\Lambda^\circ$ and $b_n=n$.
\item $a_n=n\Psi_n,   \mathcal{F}_\Lambda=\mathbb{R}^d$ and $b_n=n$.
\item $a_n/\bigl(\sqrt{n\log n}\Psi_n\bigr)\rightarrow \infty,
  a_n/(n\Psi_n)\rightarrow 
  0, 0\in \mathcal{F}_\Lambda^\circ$ and $(b_n)$ is an increasing
  positive sequence such that $b_n\sim a_n^2/(n\Psi_n^2)$ as
  $n\to\infty$. 
\item $a_n/(n\Psi_n)\rightarrow \infty,$ $\Lambda(\cdot)$ is balanced
  regular varying with exponent $\beta>1$ and $(b_n)$ is an increasing
  positive sequence such that   $b_n\sim n\tau(\Psi_nc_n)$, where  
\begin{equation}\label{cnp} c_n=\sup \{x:\tau(\Psi_nx)/x\le
  a_n/n\}.
\end{equation} 
\end{enumerate} 

\end{assumption}

Let $\mu_n(\cdot)\equiv\mu_n(\cdot;\underline{a})$ denote the law of
$a_n^{-1}S_n$. We quote the ``marginal version'' of the functional
results in \cite{ghosh:samorodnitsky:2009}; in certain cases these
have been known even earlier. The sequence $(\mu_n)$ satisfies the
large deviation principle on $\reals^d$:
\begin{equation}\label{e:LDP}
-\inf\limits_{x\in A^\circ}I_l(x) \leq
\liminf_{n\rightarrow\infty}\frac{1}{b_n}\log \mu_n(A;\underline{a})
\leq \limsup_{n\rightarrow\infty}\frac{1}{b_n}\log
\mu_n(A;\underline{a}) \leq -\inf\limits_{x\in \bar{A}}I_u(x)
\end{equation}
with a good lower function $I_l$ and a good upper function $I_u$ given
by 
\begin{eqnarray} \label{e:rate.f} 
I_l = \Lambda^*, \ I_u = \Lambda^\sharp & \text{under the assumption } S1
\nonumber \\
I_l=I_u = \Lambda^* & \text{under the assumption } S2 \nonumber \\
I_l=I_u = \bigl( G_\Sigma\bigr)^* & \text{under the assumption  } S3
\nonumber \\ 
I_l=I_u = \bigl( \Lambda^h\bigr)^* & \text{under the assumption } S4\\ 
I_l = \bigl(\Lambda_\alpha\bigr)^*, \ I_u =
\Lambda_\alpha^\sharp & \text{under the assumption } R1
\nonumber \\
I_l=I_u = \bigl(\Lambda_\alpha\bigr)^* & \text{under the assumption } R2
\nonumber \\
I_l=I_u = \bigl((G_\Sigma)_\alpha\bigr)^* & \text{under the assumption } R3 
\nonumber \\
I_l=I_u = \bigl((\Lambda^h)_\alpha\bigr)^* & \text{under the assumption } R4 
\nonumber 
\end{eqnarray}

Here, for a convex 
function $f:\, \reals^d\to (-\infty,\infty]$, we denote by $f^*$ 
its Legendre transform $f^*(x) = \sup_{\lambda\in\reals^d}\bigl\{
\lambda\cdot x - f(\lambda)\bigr\}$, $x\in\reals^d$. Further, under
the assumption $S1$, $\Lambda ^\sharp(x) = \sup_{\lambda\in\Pi}\bigl\{
\lambda\cdot x - \Lambda (\lambda)\bigr\}$, with 
\begin{equation}\label{eq:pi}
\Pi = \bigl\{ \lambda\in \reals^d:\ \text{for some} \ N_\lambda, \ 
\sup_{n\geq N_\lambda, \, i\in\bbz}
\Lambda(\lambda\phi_{i,n})<\infty\bigr\}, 
\end{equation}
where $\phi_{i,n}=\phi_{i+1}+\cdots +\phi_{i+n}$, is a partial sum of the
moving average coefficients. Further, under the assumptions $S3$ and $R3$,
$G_\Sigma$ is the log-moment generating function of a zero mean
Gaussian random vector in $\reals^d$ with the same variance-covariance
matrix as that of $Z_0$. Next, under the assumptions $S4$ and $R4$,
$\Lambda^h(\lambda)=
\zeta_\Lambda(\lambda/\|\lambda\|)\|\lambda\|^\beta$. Under the
assumptions $R1$-$R4$, for a nonnegative measurable function $f$ on
$\reals^d$ we define 
\begin{equation}\label{eq:falpha}
f_\alpha(\lambda) = 
\int_{-\infty}^\infty f\left(\lambda
(1-\alpha)\int_x^{x+1}|y|^{-\alpha}\big( p\one(y\geq
0)+q\one(y<0)\bigr)\, dy\right)\, dx 
\end{equation}
if $1/2<\alpha<1$ and $f_1=f$. Finally, under the assumption $R1$, we
define 
$\Lambda_\alpha^\sharp(x) = \sup_{\lambda\in\Pi_\alpha}\bigl\{
\lambda~\cdot~x - \Lambda_\alpha(\lambda)\bigr\}$, with 
$\Pi_\alpha$ given by 
\begin{equation}\label{pir}
\Pi_{\alpha} \defn
\Big\{\lambda: (p\wedge
q)\lambda \in \mathcal{F}_\Lambda^\circ, \text{ and for   some }
N_\lambda,   \ \sup_{n\ge
  N_\lambda ,\, i\in\mathbb{Z}}
\Lambda\Big(\frac{\lambda \phi_{i,n}}{\Psi_n}\Big)<\infty \Big\}  
\end{equation} 
for $1/2<\alpha<1$, while for $\alpha=1$, we define 
\begin{equation}\label{pir.1}
\Pi_{1} \defn
\Big\{\lambda: \lambda \in \mathcal{F}_\Lambda^\circ, \text{ and for
  some } N_\lambda,   \ \sup_{n\ge
  N_\lambda ,\, i\in\mathbb{Z}}
\Lambda\Big(\frac{\lambda \phi_{i,n}}{\Psi_n}\Big)<\infty \Big\}  .
\end{equation} 

We are now ready to state the main result of this section. The
following theorem considers the various cases in 
Assumptions \ref{ass:short} and \ref{ass:long} 
and gives us the rate of growth of the lengths of the long strange
segments in each of the cases. For a set $A$ in $\reals^d$ and
$\eta>0$ we denote  
\begin{equation}\label{Aeta}
A(\eta) \defn\big\{x:d(x,A^c)>\eta\big\},
\end{equation} 
where $d(x,A^c)$ is the distance from the point $x$ to the complement 
$A^c$. 

\begin{theorem}\label{lss:short}
If any one of $S1$-$S4$ or $R1$-$R4$ hold, then for any Borel set
$A\subset\mathbb{R}^d$, 
\begin{equation} \label{e:Tlss.short}
I_*\le
\liminf_{r\rightarrow\infty}\frac{\log
T_r(A;\underline{a})}{b_r}\le\limsup_{r\rightarrow\infty}\frac{\log
T_r(A;\underline{a})}{b_r}\le I^* 
\end{equation}
and 
\begin{equation} \label{e:Rlss.short}
\frac{1}{I^\ast}\leq \liminf_{m\rightarrow\infty}\frac{b_{R_m}}{\log m}
\leq \limsup_{m\rightarrow\infty}\frac{b_{R_m}}{\log m}
\leq \frac{1}{I_\ast}
\end{equation}
with probability 1, where, under the assumptions $S2, S3, S4, R2, R3$
and $R4$, 
\[
I_*= \inf\limits_{x\in \bar{A}}I_u(x) \ \ \mbox{ and } \ \ 
I^*= \inf\limits_{x\in A^\circ}I_l(x),
\]
with $I_l$ and $I_u$ as in \eqref{e:rate.f}. Under the assumption $S1$,
$I_*$ is defined in the same way, while $I^*$ is defined now as
follows. Let $\lambda^*=\sup\{ \lambda:\, \lambda\in \Pi\}>0$. Then 
$$
I^*= \inf_{\eta\in \Theta}\inf\limits_{x\in A(\eta)}I_l(x),
$$
where $\Theta = \{ \eta>0:\, \eta>(\lambda^*)^{-1} \inf\limits_{x\in
A(\eta)}I_l(x)\}$.  Finally, under the assumption $R1$, $I_*$ is defined
in the same way, and with 
$\lambda^*_\alpha=\sup\{ \lambda:\, \lambda\in \Pi_\alpha\}>0 $, and 
 $\Theta_\alpha = \{ \eta>0:\, \eta>(\lambda^*_\alpha)^{-1}
\inf\limits_{x\in A(\eta)}I_l(x)\}$, one sets 
$$
I^*= \inf_{\eta\in \Theta_\alpha}\inf\limits_{x\in A(\eta)}I_l(x). 
$$

\end{theorem}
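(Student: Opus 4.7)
The plan is first to use the duality $\{T_r\le m\}=\{R_m\ge r\}$ together with monotonicity of $(b_n)$ to reduce \eqref{e:Rlss.short} to \eqref{e:Tlss.short}: substituting $r=R_m$ on both sides of $\liminf_r \log T_r/b_r\ge I_*$ yields $\limsup_m b_{R_m}/\log m\le 1/I_*$, and analogously for the other inequality. Hence it is enough to prove \eqref{e:Tlss.short}. For the lower bound, I would fix $\epsilon>0$, put $m_r=\lfloor e^{b_r(I_*-\epsilon)}\rfloor$, and bound, by stationarity and a union bound over the endpoint $l$ and start $k$,
\begin{equation*}
P(T_r\le m_r)\le \sum_{n=r}^{m_r}(m_r-n+1)\,P(S_n/a_n\in A).
\end{equation*}
The LDP upper bound in \eqref{e:LDP} gives $P(S_n/a_n\in A)\le e^{-b_n(I_*-\delta)}$ for large $n$ and any $\delta\in(0,\epsilon)$; in every scenario of Assumptions \ref{ass:short}-\ref{ass:long}, $(b_n)$ grows at least linearly (or faster than logarithmically), so this sum is of order $m_r e^{-b_r(I_*-\delta)}\le Ce^{-b_r(\epsilon-\delta)}$, summable in $r$. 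Borel-Cantelli then delivers $\liminf_r \log T_r/b_r\ge I_*$ almost surely.

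For the matching upper bound, I would set $m_r=\lfloor e^{b_r(I^*+\epsilon)}\rfloor$ and introduce the truncated moving average $X_n^{(K_r)}:=\sum_{|i|\le K_r}\phi_i Z_{n-i}$ for a sequence $K_r\to\infty$ with $K_r=o(r)$. Partitioning $[0,m_r]$ into $N_r$ disjoint blocks of length $r$ separated by gaps of $2K_r$, the truncated block sums are mutually independent by $(2K_r)$-dependence of $X^{(K_r)}$. The LDP lower bound, applied to the truncated process (whose rate function converges to the original one as $K_r\to\infty$), gives $P(S_r^{(K_r)}/a_r\in A(\delta))\ge e^{-b_r(I^*+\epsilon/2)}$ for small enough $\delta$, so by independence
\begin{equation*}
P\bigl(\text{no truncated block sum lies in } A(\delta)\bigr)\le \exp\bigl(-N_r e^{-b_r(I^*+\epsilon/2)}\bigr)\le \exp\bigl(-e^{b_r\epsilon/3}/r\bigr),
\end{equation*}
which is summable in $r$. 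The approximation error $\max_{n\le m_r}|S_n-S_n^{(K_r)}|/a_r\le \delta$ is controlled with high probability by $\sum_{|i|>K_r}|\phi_i|$ in the short-memory case and by the balanced regular variation of $(\phi_i)$ in the long-memory case, using Bernstein-type exponential inequalities on the i.i.d.\ innovations $(Z_i)$. Borel-Cantelli combined with these two ingredients forces $R_{m_r}\ge r$, hence $T_r\le m_r$, eventually almost surely, giving $\limsup_r \log T_r/b_r\le I^*$ a.s.

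The main obstacle will be the treatment of scenarios $S1$ and $R1$, where the LDP upper rate $I_u=\Lambda^\sharp$ (resp.\ $\Lambda_\alpha^\sharp$) is strictly weaker than the lower rate $I_l=\Lambda^*$ (resp.\ $(\Lambda_\alpha)^*$) because the admissible set of exponential tilts $\Pi$ (resp.\ $\Pi_\alpha$) is a proper subset of $\mathbb{R}^d$. The LDP lower bound then does not in general deliver the rate $\inf_{A^\circ}I_l$ near the boundary of $\Pi$, so one must work with the thickened set $A(\eta)$ and insist that $\eta\in\Theta$: the constraint $\eta>(\lambda^*)^{-1}\inf_{A(\eta)}I_l$ forces the Cram\'er tilt that realises deviations of size $\eta$ to remain inside $\Pi$ with a uniform margin, which is exactly the input required for the block-independence argument above to run on $A(\eta)$ instead of $A^\circ$. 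Optimising over $\eta\in\Theta$ (resp.\ $\Theta_\alpha$) then yields the modified definition of $I^*$ stated in the theorem.
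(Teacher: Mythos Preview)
Your duality reduction and lower bound are fine and match the paper. The upper bound, however, has a genuine gap in the long-memory scenarios $R1$--$R4$, and your explanation of the $\Theta$ constraint in $S1$/$R1$ misidentifies its origin.

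The problem is your choice of truncation level $K_r=o(r)$. In the long-memory regime the coefficients satisfy $\sum_{|i|\le K}\phi_i\sim c\,\Psi_K$, so for a block of length $r$ the truncated partial sum $S_r^{(K_r)}$ has, for each innovation $Z_j$ with $1\le j\le r$, a coefficient of order $\Psi_{K_r}$ rather than $\Psi_r$. Since $K_r=o(r)$ gives $\Psi_{K_r}=o(\Psi_r)$, the ratio $S_r^{(K_r)}/a_r$ is asymptotically negligible and cannot satisfy the same LDP as $S_r/a_r$; your claim that ``the rate function converges to the original one'' fails. The paper instead truncates the \emph{innovations} at level $r^2$ (not the coefficients at $o(r)$), defining $\mu_r'(\cdot)=P\bigl(a_r^{-1}\sum_{|i|\le r^2}\phi_{i,r}Z_i\in\cdot\bigr)$ and $\mu_r''$ for the remainder. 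Because $r^2/r\to\infty$, the truncated log-moment generating function has the same limit as the full one (this is \eqref{e:same.limit}, from \cite{ghosh:samorodnitsky:2009}), so $\mu_r'$ inherits the LDP. The cost is that independent blocks must now be separated by $2r^2+1$, giving $\lfloor q/(2r^2+1)\rfloor$ independent trials; this polynomial loss is harmless against the exponential gain $e^{b_r\epsilon/2}$.

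Your account of the $S1$/$R1$ constraint $\eta\in\Theta$ is also off. The constraint does not arise from keeping ``the Cram\'er tilt inside $\Pi$'' for the LDP lower bound --- that bound uses $I_l=\Lambda^*$ and is valid on all of $A^\circ$ regardless of $\Pi$. Rather, it comes from the \emph{error term}: the paper bounds $\mu_r''(\{|x|>\eta\})$ by an exponential Markov inequality with tilt $\lambda$, yielding rate $-\lambda\eta$, and in $S1$/$R1$ one is restricted to $\lambda<\lambda^*$ (resp.\ $\lambda^*_\alpha$). For the error sum $\sum_r e^{b_r(I^*+\epsilon)}\mu_r''(\{|x|>\eta\})$ to converge one needs $\lambda\eta>I^*+\epsilon$, i.e.\ $\eta>(\lambda^*)^{-1}\inf_{A(\eta)}I_l$, which is exactly the definition of $\Theta$. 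Your ``Bernstein-type'' error control has no such parameter, so it cannot produce this condition.
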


\begin{remark} \label{rk:same.rates}
{\rm 
In certain cases it turns out that $I_*=I^*$ in Theorem
  \ref{lss:short}, and then its conclusions may be strengthened. 
For example, under the assumptions S2, S3, S4, R2, R3
or R4,  suppose that for some Borel set $A$, 
$$
\inf\limits_{x\in   A^\circ}I_l(x)=\inf\limits_{x\in 
    \bar{A}}I_u(x)=I \ \text{(say)}.
$$
Then, with probability 1,  
\begin{equation}\label{onelimit} 
\lim_{r\rightarrow \infty} \frac{\log T_r}{b_r}=I 
\end{equation}
and
\begin{equation}\label{onelimit.R}
\lim_{m\rightarrow\infty}\frac{b_{R_m}}{\log m} = \frac{1}{I}\,.
\end{equation}
}
\end{remark} 

Because of the large deviation principle for the sequence $(\mu_n)$, 
 the sequence $(b_n)$ is the ``right''
  normalization to use in the Theorem \ref{lss:short}. In particular,
 if, for 
  instance, the set $A$ is bounded away from the origin (which we
  recall to be the mean of the moving average process), then the
  quantity $I_*$ is strictly positive. Under further additional
  assumptions on the set $A$ the quantity $I^*$ will be finite, and
  then \eqref{e:Tlss.short} and \eqref{e:Rlss.short} give us precise
  information on the order of magnitude of long strange segments. 

Notice that under the ``usual'' normalization  $a_n=n$, Theorem
\ref{lss:short} says that $R_m$ grows like $\log m$ in the short
memory case (i.e. under the assumption $S1$); see also  Theorem 3.2.1 in
\cite{dembo:zeitouni:1998}. On the other hand, in the long memory
case, it is easy to see that the case $a_n=n$ falls into 
 the assumption $R3$, and then  the length $R_m$ of the long strange
segments grows at the rate $\Theta(\log m)$, where $\Theta$ is regularly
varying at infinity with exponent $1/(2\alpha-1)$. Therefore, long
strange segments are much longer in the long memory case than in the short
memory case. In fact, to get long strange segments with length of
order $\log m$ in the long memory case one needs to use a stronger
normalization $a_n=n\Psi_n$ (the assumptions $R1$ and $R2$). This phase
transition property is directly inherited from the similar phenomenon
for large deviations; see \cite{ghosh:samorodnitsky:2009}. 

To emphasize more generally the difference between the length of the 
long strange segments in the two cases we summarize in the table below
the 
corresponding statements of Theorem \ref{lss:short} for $(a_n)$ being
a regularly varying sequence with exponent $\omega\geq 1/2$ of regular
variation. We will implicitly assume that the appropriate assumptions
of the theorem hold in each case, and that the limits $I_*$ and $I^*$
are positive and finite. The general statement is that, with
probability 1, $R_m$ is of the order $\Theta(\log m)$, where $\Theta $ is
regularly varying at infinity with some exponent $\theta$. We describe
$\theta$ as a function of $\omega$ in all cases. The value
$\theta=\infty$ corresponds to $R_m$ growing faster than any power of
$\log m$.  In all cases the long
strange segments are much longer in the long memory case than in the short
memory case. Recall that $-\alpha$ is the exponent of regular
variation of the coefficients in Assumption \ref{regularvarying}, and
$\beta$ is the exponent of regular variation of $\Lambda$ in
assumptions $S4$ and $R4$. 
\begin{table}[h]\label{tab:tab1}
\caption{The effect of memory on the rate of growth of Long Strange
  Segments of a Moving Average Process} 
%\doublespacing
\begin{center}
\begin{tabular}{|c|c|c|c|}
\hline
Range of $\omega$ &Assumptions& Short memory & Long memory\\ 
\hline
$\frac12\leq \omega\leq \frac32 - \alpha$ &$S3$ &$ \theta =
\frac{1}{2\omega-1} $& $\theta=\infty $\\ 
$\frac32 - \alpha \leq \omega\leq 1$ &$S1$, $S2$, $S3$, $R3$& $\theta = \frac{1}{2\omega -1}$& $\theta=\frac{1}{2\omega +2\alpha -3} $\\ 
$1\leq \omega \leq 2-\alpha $ &$S4$, $R1$,
$R2$, $R3$ & $\theta = \frac{\beta-1}{\beta \omega
  -1}$& $\theta=\frac{1}{2\omega +2\alpha -3}$\\  
$\omega \geq 2-\alpha$ &$S4$, $R4$&$ \theta = \frac{\beta-1}{\beta \omega -1}$
& $\theta=\frac{\beta-1}{\beta(\omega +\alpha-1)-1} $\\ 
\hline
\end{tabular}
\end{center}
\end{table}
Notice that the long range dependent case in the first row of the
table does not correspond to any assumption we have made. The fact
that $\theta=\infty$ in this case follows as one of the extreme cases
of the second row in the table.

\begin{proof}[Proof of Theorem \ref{lss:short}]
The duality relation $\{ R_m(A;\underline{a})\geq r\} = \{
T_r(A;\underline{a})\leq m\}$ and monotonicity of the sequence $(b_n)$
imply that the statements \eqref{e:Tlss.short} and
\eqref{e:Rlss.short} are equivalent. We will, therefore, concentrate
on proving \eqref{e:Tlss.short}. The proof of the lower bound is
standard, and does not rely on the fact that the underlying process is
a moving average; see Theorem 3.2.1 in \cite{dembo:zeitouni:1998}. We
include an argument for completeness. Note that for every $r,m\geq 1$
$$
P\bigl( T_r(A;\underline{a})\leq m\bigr) \leq m\sum_{n=r}^\infty
\mu_n(A;\underline{a})\,.
$$
If $I_*=0$, there is nothing to prove. Suppose that
$0<I_*<\infty$. Choose $0<\vep<I_*$. By the definition of $I_*$ and
the large deviation principle \eqref{e:LDP}, we
know that there is $c=c_\vep\in (0,\infty)$ such that
$\mu_n(A;\underline{a}) \leq ce^{-b_n(I_*-\vep/2)}$ for all $n\geq
1$. Choosing $m=\lfloor e^{b_r(I_*-\vep)}\rfloor$ gives us
	\begin{eqnarray}
	\sum_{r=1}^\infty P(T_r\le e^{b_r(I_*-\vep)})&\le &
	\sum_{r=1}^\infty e^{b_r(I_*-\vep)}\sum_{n=r}^\infty
	ce^{-b_n(I_*-\vep/2)} \nonumber\\ 
	& \le & c^\prime \sum_{r=1}^\infty e^{-b_r\vep/2}<\infty
	\nonumber 
	\end{eqnarray}
for some positive constant $c^\prime$ (depending on $\vep$). Using the
first Borel-Cantelli lemma and letting $\vep\downarrow 0$ established
the lower bound in \eqref{e:Tlss.short}. When $I_*=\infty$, we take
any $\vep>0$ and observe that by the definition of $I_*$ 
there is $c=c_\vep\in (0,\infty)$ such that
$\mu_n(A;\underline{a}) \leq ce^{-2b_n/\vep}$ for all $n\geq
1$. Choose now $m=\lfloor e^{b_r/\vep}\rfloor$ and proceed as above to
conclude that 
 \[
\sum_{r=1}^\infty P(T_r\le e^{b_r/\vep})<\infty,
\]
after which one uses, once again, the
first Borel-Cantelli lemma and lets $\vep\downarrow 0$ to obtain the
lower bound in \eqref{e:Tlss.short}.  

For the upper bound in \eqref{e:Tlss.short}, we only need to consider 
the case $I^*<\infty$. In that case the set $A$ has nonempty interior.
Define two new probability measures by  
$$
	\mu_n^\prime(\cdot)\defn P\Big(\frac{1}{a_n}\sum_{|i|\le n^2}
	\phi_{i,n}Z_i\in \cdot\Big)\mbox{ and
	}\mu_n^{\prime\prime}(\cdot)\defn
	P\Big(\frac{1}{a_n}\sum_{|i|> n^2} \phi_{i,n}Z_i\in
	\cdot\Big), 
$$
where, as before, $\phi_{i,n}=\phi_{i+1}+\cdots +\phi_{i+n}.$

For  any sequence  $(k_n)$ of integers, with $k_n/n\rightarrow
\infty$,  and any $\lambda>0$ under the assumptions $S2, S3, S4, R2, R3$
and $R4$, any $\lambda\in \Pi$ under the assumption $S1$, or any
$\lambda\in \Pi_\alpha$ under the assumption $R1$,  
\begin{equation} \label{e:same.limit}
	\lim_{n\rightarrow \infty}
	\frac{1}{b_n}\sum_{i=-k_n}^{k_n}\Lambda \Big( \frac{b_n}{a_n}
	\lambda\phi_{i,n} \Big) = \lim_{n\rightarrow \infty}
	\frac{1}{b_n}\sum_{i=-\infty}^{\infty}\Lambda
	\Big(\frac{b_n}{a_n} \lambda\phi_{i,n} \Big); 
\end{equation}
see Remark 3.7 in \cite{ghosh:samorodnitsky:2009}. This means that the
sequence $(\mu_n^\prime)$ satisfies the LDP with  speed $b_n$ and
same upper rate functions $I_u$ given in \eqref{e:rate.f} 
as the sequence $(\mu_n)$. The fact that
the same is true for the lower rate functions in \eqref{e:rate.f}  
follows from the
argument in theorems 2.2 and 2.4 in \cite{ghosh:samorodnitsky:2009}. 

For fixed integers $r,q$, and  $l=1,\ldots,\lfloor q/(2r^2+1) \rfloor$,
define
\[ 
B_l\defn \frac{1}{a_r}\sum_{i=1+(l-1)(2r^2+1)}^{r+(l-1)
  (2r^2+1)}X_i,
\] 
and 
\[ 
B_l^\prime\defn \frac{1}{a_r}
\sum_{j=-r^2}^{r^2}\phi_{j,r}Z_{-j+(l-1)(2r^2+1)}.
\]   
Since the $B_l^\prime$ are independent,  for any $r$ and  $q$ we have,
\begin{eqnarray*}
	&& P\big[T_r>q\big]\\
	 & \le & P\left[  B_l\notin A, l=1,\ldots, \left\lfloor
	 \frac{q}{2r^2+1}\right\rfloor  \right] \\ 
& \le & P\left[  B^\prime_l\notin A(\eta),l=1,\ldots, \left\lfloor
	 \frac{q}{2r^2+1}\right\rfloor \right] 
     + \sum_{l=1}^{\lfloor q/(2r^2+1) \rfloor} P\Big[
	 |B_l-B_l^\prime|>\eta\Big]\\ 
&= & \Big( 1-\mu_r^\prime\big(A(\eta) \big) \Big)^{\lfloor q/(2r^2+1)
	 \rfloor} + \sum_{l=1}^{\lfloor q/(2r^2+1) \rfloor} P\Big[
	 |B_l-B_l^\prime|>\eta\Big]\\ 
& \le & \exp\Big(-\frac{q}{2r^2+1}\mu_r^\prime\big(A(\eta)\big) \Big)+
	 \frac{q}{2r^2+1}\mu_r^{\prime\prime}\big(\{x:|x|>\eta\}\big). 
\end{eqnarray*}
By the definition of $I^*$ and
the large deviation principle \eqref{e:LDP}, for any $\vep>0$ 
there is $c=c_{\vep}\in (0,\infty)$ such that for all $\eta>0$ small
enough, 
$\mu_n^{ \prime }(A(\eta)) \geq ce^{-b_n(I^*+\vep/2)}$ for all $n$ large than
some $n_{\vep}$. Therefore, fixing $\vep>0$ and using the bound
above with  $q= e^{b_r(I^*+\vep)}$, we see that for some
$C=C_{\vep}\in (0,\infty)$, for all $\eta>0$ small enough, 
\begin{eqnarray}\label{lss:pt1}
\sum_{r=1}^\infty\exp\Big(-\frac{ e^{b_r(I^*+\epsilon)}
}{2r^2+1}\mu_r^\prime\big(A(\eta)\big) \Big)&\le
&C\sum_{r=1}^\infty\exp\Big(-c\frac{ e^{b_r(I^*+\epsilon)}
}{2r^2+1}e^{-b_r(I^*+\epsilon/2)} \Big) \nonumber \\ 
& = & C\sum_{r=1}^\infty \exp\Big(-
c\frac{e^{b_r(\epsilon/2)}}{2r^2+1}\Big)<\infty. 
\end{eqnarray}
Suppose first that we are under the assumptions S2, S3, S4, R2, R3
or R4. Fixing $\vep>0$ and choosing $\eta>0$ small enough for the above
to hold, we see that 
\begin{eqnarray*}
	&&  \limsup_{ n \rightarrow \infty} \frac{1}{b_n} \log
	\mu_n^{\prime\prime}\big(\{x:|x|>\eta\} \big) \\ 
	& \le & \limsup_{ n \rightarrow \infty} \frac{1}{b_n} \log
	\left( e^{-b_n\lambda \eta} E\Big[\exp \Big\{ \lambda
	\frac{b_n}{a_n} \sum_{|i|>n^2}\phi_{i,n}Z_i\Big\} \Big]
	\right)\\ 
	& = & -\lambda \eta + \limsup_{ n \rightarrow
	\infty}\frac{1}{b_n}\sum_{|i|>n^2}\Lambda\Big(\frac{b_n}{a_n}\lambda
	\phi_{i,n} \Big) = -\lambda\eta, 
\end{eqnarray*}
with the last equality following from \eqref{e:same.limit}. 
Choosing now $\lambda>(I^*+\epsilon)/\eta$ (which is possible under
the current assumptions no matter how small $\eta>0$ is), we obtain 
\begin{equation}\label{lss:pt2}
 \sum_{r=1}^\infty
 \frac{e^{b_r(I^*+\epsilon)}}{2r^2+1}
\mu_r^{\prime\prime}\big(\{x:|x|>\eta\}\big) <\infty. 
\end{equation} 
Combining (\ref{lss:pt1}) and (\ref{lss:pt2}) we have
$\sum_{r=1}^\infty P\Big[ T_r>e^{b_r(I^*+\epsilon)}\Big]<\infty$, so
that using the first Borel-Cantelli lemma gives and letting
$\vep\downarrow 0$ proves the upper bound in \eqref{e:Tlss.short}. The
cases of the assumptions $S1$ and $R1$ are the same, except now $\lambda$
cannot be taken to be arbitrarily large, which restricts the feasible
values of $\eta>0$. This completes the proof.  
\end{proof}

\end{section}

\begin{section}{Ruin Probabilities}\label{sec:ruin}

This section discusses the rate of decay ruin probability for a moving
average  process $ (   X_{ n},n\in \mathbb{Z}  )$ in \eqref{maproc}. 
We study the  probability of ruin in infinite time, defined as 
\begin{equation}\label{ruindefn}
	\rho(u; A;\underline{a};\mu)= \rho(u)=P \left[ Y_n\in
          uA\ \text{ for some }n\ge 1  \right] 
\end{equation}
where $(Y_n)$ is given by \eqref{eq:ruin} for some $\mu\in \bbr^d$ and
a sequence $ \underline{a}=(a_{ n})$ increasing to $ \infty$, and $A\subset\bbr^d$ is a
Borel set. A related notion is the {\it time of ruin} defined by
\[
	T(u)=\inf \left\{ n:\, Y_n\in  uA  \right\}.
\]
Clearly, $\rho(u)=P[T(u)<\infty]$. We will study the asymptotic
behavior of $ \rho(u)$ 
%how fast $ \rho(u)$ converges to zero 
as  $ u$ increases to infinity.   

Our main results are in the following theorems, roughly
corresponding to assumptions \ref{ass:short} and \ref{ass:long} of the
previous section. We start with the short memory regimes. 

\begin{theorem}\label{thm:ma:rp:sm} 
If $ S1$ holds, then 
$$
-\inf_{t\in {\mathcal F}}r(t)\bigl[ t\nabla \Lambda(t) -
  \Lambda(t)\bigr]
\leq \liminf_{u\rightarrow \infty}\frac{1}{u}\log \rho(u)
$$
$$
\leq 
\limsup_{u\rightarrow \infty}\frac{1}{u}\log \rho(u)\leq -
\sup_{t\in {\mathcal D}}\inf_{\gamma\in A}t\gamma\,,
$$
where 
$$
{\mathcal D} = \Bigl\{ t\in \bbr^d:\, \inf_{\gamma\in
  A}t\gamma>  0, \, \sup_{n\geq   1} \Bigl[
  \sum_{i\in\mathbb{Z}}\Lambda\left(t\phi_{i,n}\right)-nt \mu
  \Bigr]<\infty\Bigr\},
$$
$$
{\mathcal F} = \bigl\{ t\in
     \Pi^\circ:\,  r\, \bigl(\nabla \Lambda(t)-\mu\bigr)\in
     A^\circ\ \text{for some $\rho>0$}\bigr\},
$$
and $r(t) = \inf\{r>0:\, r\, \bigl(\nabla
\Lambda(t)-\mu\bigr)\in A^\circ\}$.  
\end{theorem}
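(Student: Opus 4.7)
The proof splits naturally into the two one-sided bounds, each handled by a different ingredient from earlier in the paper.

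\textbf{Lower bound.} Fix $t\in\mathcal{F}$ and pick any $r>r(t)$ for which $r(\nabla\Lambda(t)-\mu)\in A^\circ$; such an $r$ exists by the definition of $r(t)$. Put $n_u=\lfloor ru\rfloor$. The event $\{Y_{n_u}\in uA\}$ rewrites as $\{S_{n_u}/n_u\in uA/n_u+\mu\}$, and the inclusion $r(\nabla\Lambda(t)-\mu)\in A^\circ$ guarantees $\nabla\Lambda(t)\in (uA/n_u+\mu)^\circ$ for every $u$ large enough. Apply the lower half of the large deviation principle \eqref{e:LDP}, whose lower rate under $S1$ is $I_l=\Lambda^*$, to obtain
$$
\liminf_{u\to\infty}\frac{1}{n_u}\log P\bigl[Y_{n_u}\in uA\bigr]\geq -\Lambda^*(\nabla\Lambda(t)).
$$
Because $t\in\Pi^\circ\subset\mathcal{F}_\Lambda^\circ$, $\Lambda$ is differentiable at $t$, so the Legendre identity gives $\Lambda^*(\nabla\Lambda(t))=t\nabla\Lambda(t)-\Lambda(t)$. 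Using $\rho(u)\ge P[Y_{n_u}\in uA]$ together with $n_u/u\to r$,
$$
\liminf_{u\to\infty}\frac1u\log\rho(u)\ \geq\ -r\bigl[t\nabla\Lambda(t)-\Lambda(t)\bigr].
$$
Letting $r\downarrow r(t)$ and then taking the infimum over $t\in\mathcal{F}$ on the right yields the claimed lower bound.

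\textbf{Upper bound.} Fix $t\in\mathcal{D}$ and set $C=\inf_{\gamma\in A}t\gamma>0$. Independence of the innovations together with \eqref{maproc} gives the explicit formula
$$
E\bigl[e^{t\cdot Y_n}\bigr]=\exp\!\Bigl(\sum_{i\in\mathbb{Z}}\Lambda(t\phi_{i,n})-nt\mu\Bigr),
$$
which, by the second condition defining $\mathcal{D}$, is bounded uniformly in $n$ by some $K<\infty$. Chebyshev then gives $P[Y_n\in uA]\le Ke^{-uC}$ for every single $n$. To pass from this fixed-$n$ bound to the union $\rho(u)=P[\bigcup_n\{Y_n\in uA\}]$, invoke the multivariate Nyrhinen-type estimate proved in the Appendix: a uniform exponential moment bound of the above form, together with $\inf_{\gamma\in A}t\gamma>0$, implies
$$
\limsup_{u\to\infty}\frac{1}{u}\log\rho(u)\leq -\inf_{\gamma\in A}t\gamma.
$$
Taking the supremum over $t\in\mathcal{D}$ delivers the stated upper bound.

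\textbf{Main obstacle.} The delicate step is the upper bound. Because $\rho(u)$ is the probability of a union over \emph{all} $n\geq 1$, a naive union bound combined with Chebyshev diverges. The appendix handles this by splitting $n$ into a bulk range $n\lesssim u$, where the uniform MGF control plus Chebyshev suffices after a finite union, and a tail range $n\gg u$, where one exploits that $Y_n\sim -n\mu$ with $t\mu>0$ (forced implicitly by $\mathcal{D}$, since otherwise $\sum_i\Lambda(t\phi_{i,n})-nt\mu$ could not stay bounded in $n$) so that the events $\{Y_n\in uA\}$ become exponentially rare in $n$. Packaging this bookkeeping into a clean multivariate Cramér--Lundberg bound is the technical heart of the argument and is precisely what the appendix accomplishes.
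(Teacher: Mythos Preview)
Your proof is correct and matches the paper's approach closely. Both obtain the upper bound by invoking the multivariate Nyrhinen-type estimate in the Appendix (Theorem~\ref{t:nyrh3.1}(i)), after writing $\log E[e^{tY_n}]=\sum_{i}\Lambda(t\phi_{i,n})-nt\mu$.

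For the lower bound the paper proceeds slightly differently: it applies part (ii) of Theorem~\ref{t:nyrh3.1} directly, using Lemma~3.5(i) of \cite{ghosh:samorodnitsky:2009} to verify $\Pi^\circ\subseteq\mathcal E$ and $g(t)=\Lambda(t)-t\mu$ there, so that $\nabla g(t)=\nabla\Lambda(t)-\mu$ and $\eta(t)[g(t)-t\nabla g(t)]=-r(t)[t\nabla\Lambda(t)-\Lambda(t)]$. You instead bypass the Appendix and invoke the marginal LDP \eqref{e:LDP} at the single time $n_u=\lfloor ru\rfloor$. Since the proof of Theorem~\ref{t:nyrh3.1}(ii) is itself the standard exponential tilting argument underlying the LDP lower bound, the two routes are equivalent in substance; yours just packages the tilting inside the pre-existing LDP quote rather than redoing it. One small wrinkle worth making explicit: the sets $uA/n_u+\mu$ vary with $u$, so the LDP does not apply to them verbatim. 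The fix is exactly what your interior remark suggests --- pick a fixed open ball $B(\nabla\Lambda(t),\delta)$ contained in $uA/n_u+\mu$ for all large $u$, and apply \eqref{e:LDP} to that fixed ball.
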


\begin{remark}\label{rem:explicit.sL}
{\rm
In certain cases Theorem \ref{thm:ma:rp:sm} provides a precise and
explicit statement. Suppose for simplicity that $\Lambda(t)<\infty$
for all $t$, and that the random variable $\mu Z$ is unbounded. Then
there exists a unique $w>0$ such that 
$$
\Lambda(w\mu) = w\|\mu\|^2.
$$
Assume that $r\bigl( \nabla \Lambda(w\mu)-\mu\bigr)\in A^\circ$ for
some $r>0$, and let 
$$
\gamma_* = r(w\mu)\bigl( \nabla \Lambda(w\mu)-\mu\bigr)\in
\overline{\bigl( A^\circ\bigr)}. 
$$
Then the lower bound in Theorem \ref{thm:ma:rp:sm} gives us  
$$
\liminf_{u\rightarrow \infty}\frac{1}{u}\log \rho(u) \geq
-w\gamma_*\mu\,.
$$
If we assume, additionally, that $\inf_{\gamma\in A}\mu\gamma>0$, then
it follows that $a\mu\in {\mathcal D}$ for any $0<a<w$, and a further
assumption $\gamma_*\in\argmin\bigl\{ \mu\gamma:\, \gamma\in A\bigr\}$
will allow us to conclude from the upper bound  Theorem
\ref{thm:ma:rp:sm} that
$$
\limsup_{u\rightarrow \infty}\frac{1}{u}\log \rho(u) \leq
-w\gamma_*\mu\,.
$$
Therefore, 
\begin{equation} \label{e:explicit.sL}
\lim_{u\rightarrow \infty}\frac{1}{u}\log \rho(u) = 
-w\gamma_*\mu\,.
\end{equation}
All of the assumptions are easily seen to be satisfied in the
one-dimensional case with $\mu>0$ and $A=(1,\infty)$. 
}
\end{remark}

For the next two theorems we introduce the following condition on the
set $A$.
\begin{condition}\label{cond:A} 
We say that a set $A\in\bbr^d$ satisfies Condition $\mathcal A$ if 
\begin{itemize}
\item there is $t\in\bbr^d$ such that $t\mu>0$ and 
 $\inf_{\gamma\in A} t\gamma>0$; 
\item for any $x\in A$ and $\rho>0$, $x+\rho\mu\in A$ and
  $(1+\rho)x\in A$.  
\end{itemize}
\end{condition}

\begin{theorem}\label{thm.sm.1}
Suppose that the set $A$
  satisfies Condition $\mathcal A$ (Condition \ref{cond:A}). If  $S3$
  holds, and $ (a_{ n})\in RV_{ \omega }$ for  some   $1/2<\omega\leq
  1$, then 
$$
-\inf_{c>0} \left[ c^{-(2w-1)/w}\inf_{\gamma\in A^\circ}\left( \frac12(\mu
  +c\gamma)^\prime\Sigma^{-1}(\mu   +c\gamma)\right)\right]
\leq \liminf_{ u \rightarrow \infty  }   \frac{ 1}{b_{ a^{ \leftarrow }(u)} } \log \rho(u)
$$
$$
\leq \limsup_{ u \rightarrow \infty  }   \frac{ 1}{b_{ a^{ \leftarrow }(u)} } \log \rho(u)
\leq -\inf_{c>0} \left[ c^{-(2w-1)/w}\inf_{\gamma\in A}\left(
  \frac12(\mu 
  +c\gamma)^\prime\Sigma^{-1}(\mu   +c\gamma)\right)\right],
$$
where the inverse of $(a_n)$ is defined by $a^{\leftarrow
}(u) = \inf\{n\geq 1:\, a_n\geq u\}$, $u>0$.  
\end{theorem}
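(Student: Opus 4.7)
The plan is to prove the lower and upper bounds separately, both by leveraging the large deviation principle \eqref{e:LDP}, which under $S3$ has rate function $I_l = I_u = G_\Sigma^*$, with $G_\Sigma^*(x) = \tfrac{1}{2} x^\prime \Sigma^{-1} x$ and speed $b_n \sim a_n^2/n$. The regular variation of $(a_n)$ with exponent $\omega \in (1/2, 1]$ implies that of $(b_n)$ with exponent $2\omega - 1$, and of $a^{\leftarrow}$ with exponent $1/\omega$; in particular, $b_{a^{\leftarrow}(u/c)}/b_{a^{\leftarrow}(u)} \to c^{-(2\omega-1)/\omega}$ as $u \to \infty$ for each fixed $c>0$.

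For the lower bound, I fix $c > 0$ and $\gamma \in A^\circ$ and take $n_u = \lfloor a^{\leftarrow}(u/c) \rfloor$, so that $a_{n_u}/u \to 1/c$. Since $\{Y_{n_u} \in uA\}$ is equivalent to $\{S_{n_u}/a_{n_u} \in \mu + (u/a_{n_u}) A\}$ and the target set contains a fixed open neighborhood of $\mu + c\gamma$ for all $u$ large, the LDP lower bound in \eqref{e:LDP} yields
\[
\liminf_{u\to\infty} \frac{1}{b_{n_u}} \log P[Y_{n_u} \in uA] \geq -G_\Sigma^*(\mu + c\gamma).
\]
Using $\rho(u) \geq P[Y_{n_u} \in uA]$, converting speeds by the ratio above, and then optimizing over $\gamma \in A^\circ$ and $c > 0$ produces the stated lower bound.

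The bulk of the work lies in the upper bound. I would start from $\rho(u) \leq \sum_{n \geq 1} P[Y_n \in uA]$ and apply, to each summand, the multivariate exponential-Chebyshev estimate of the Appendix (the multivariate extension of Nyrhinen's argument), namely
\[
P[Y_n \in uA] \leq \exp\Big( \sum_{i\in\bbz} \Lambda(t \phi_{i,n}) - t \cdot a_n \mu - u \inf_{\gamma \in A} t\gamma \Big)
\]
for any $t \in \bbr^d$ with $\inf_{\gamma \in A} t\gamma > 0$ (Condition \ref{cond:A} guarantees such $t$ exists). Under $S3$ the optimal $t$ is of order $b_n/a_n$, which drives the arguments $t\phi_{i,n}$ uniformly to $0$, and the Taylor expansion $\Lambda(\lambda) = \tfrac{1}{2}\lambda^\prime \Sigma \lambda + o(|\lambda|^2)$ combined with $\sum_i \phi_{i,n}^2 \sim n(\sum_i \phi_i)^2 = n$ yields $\sum_i \Lambda(t\phi_{i,n}) \sim \tfrac{n}{2}\, t^\prime \Sigma t$. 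Optimizing over $t$ and setting $c = u/a_n$ rewrites the exponent as $-b_{a^{\leftarrow}(u)}\, c^{-(2\omega-1)/\omega} J(c)(1+o(1))$, where $J(c) = \inf_{\gamma \in \bar{A}} G_\Sigma^*(\mu + c\gamma)$. The sum over $n$ has at most $O(a^{\leftarrow}(u))$ effectively contributing terms and is dominated at the logarithmic scale by the smallest value of this exponent over $c > 0$, giving the claimed upper bound.

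Condition \ref{cond:A} plays three supporting roles: the existence of $t$ with $t \mu > 0$ and $\inf_A t\gamma > 0$ places $A$ in a half-space separated from $-\mu$, so the Chebyshev exponent is strictly positive; the invariance under $x \mapsto (1+\rho)x$ and $x \mapsto x + \rho\mu$ gives monotonicity and continuity of $c \mapsto J(c)$; and together these ensure that $c^{-(2\omega-1)/\omega} J(c) \to \infty$ as $c \to 0$ and as $c \to \infty$, so $\inf_{c > 0} c^{-(2\omega-1)/\omega} J(c)$ is attained at a strictly positive interior minimizer. The main obstacle I anticipate is obtaining enough uniformity in $n$ of the Chebyshev bound so that the summation over $n$ preserves the logarithmic asymptotics, especially for $n$ far from the optimal $n^* \sim a^{\leftarrow}(u/c^*)$; this is precisely what the multivariate Nyrhinen-type estimate in the Appendix is designed to provide.
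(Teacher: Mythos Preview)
Your lower bound matches the paper's: fix $c>0$, pick the single time $n\approx a^{\leftarrow}(u/c)$, apply the marginal LDP lower bound \eqref{e:LDP}, convert speeds via regular variation of $(b_n)$, and optimize over $c$ and $\gamma$.

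For the upper bound the paper proceeds quite differently from your sketch. After showing that the tail $\{T(a_n)>nM\}$ and the head $\{T(a_n)\le N\}$ are negligible at speed $b_n$, it rewrites $\{N<T(a_n)\le nM\}$ as the event that the rescaled path $t\mapsto S_{[nMt]}/a_{nM}$ lies in a certain subset $B\subset\mathcal{BV}$ and applies the \emph{functional} LDP of Theorem~2.2 in \cite{ghosh:samorodnitsky:2009}. Convexity of $G_\Sigma^*$ then reduces $\inf_{f\in\bar B}\int_0^1 G_\Sigma^*(f')\,dt$ to a one-parameter minimization in $t_0\in(0,1]$, and the change of variables $c=(t_0M)^{-(\omega+\delta)}$ followed by $\delta\to 0$ yields the stated bound. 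The functional LDP absorbs in one stroke the ``varying $c=u/a_n$'' that your termwise summation must handle.

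Two concrete obstacles stand between your sketch and a proof. First, a single Chebyshev inequality optimized over $t$ yields $\inf_t\bigl[J(t)-t\mu-c\inf_{\gamma\in A}t\gamma\bigr]=\inf_t\sup_{\gamma\in A}\bigl[J(t)-t(\mu+c\gamma)\bigr]$, which a priori is only $\ge -\inf_{\gamma\in A}G_\Sigma^*(\mu+c\gamma)$; without convexity of $A$ (which Condition~$\mathcal A$ does not impose) the minimax gap may be strict, so the sharp constant does not follow from raw Chebyshev. Second, the Appendix does \emph{not} supply the summation you want: Theorem~\ref{t:nyrh3.1}(i) works at speed $n$ with a \emph{fixed} $t_0$ satisfying $g(t_0)<0$, but under $S3$ one has $a_n/n\to 0$ and hence $g(t)=\limsup_n n^{-1}\log Ee^{tY_n}=\Lambda(t)\ge 0$ for every $t$, so the hypothesis fails outright. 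Even the paper's own tail estimate \eqref{eq:al=1ptlast} uses the rescaled multiplier $(b_k/a_k)t$ together with Lemma~3.5(ii) of \cite{ghosh:samorodnitsky:2009}, not the Appendix. Pushing that rescaled Chebyshev bound through the full range of $k$ with the uniformity needed for a Laplace-type summation is genuine additional work; the functional LDP bypasses it entirely.
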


\begin{remark}\label{rem:explicit.sG}
{\rm
Again, in certain cases the statement of Theorem \ref{thm.sm.1} takes 
a very explicit form. Suppose, for example, that 
\begin{equation} \label{e:gamma.0}
\text{there is} \ \gamma_0\in \overline{ \bigl( A^\circ\bigr)}
\ \ \text{such that}
\end{equation}
$$
\gamma^\prime\, \Sigma^{-1}\gamma \geq \gamma_0^\prime\, \Sigma^{-1}\gamma_0 
\ \ \text{and}
\ \ \mu^\prime\, \Sigma^{-1}\bigl(\gamma-\gamma_0\bigr)\geq 0
\ \ \text{for all $\gamma\in A$.}
$$
This would be, for instance, the situation in the one-dimensional case
with $\mu>0$ and $A=(1,\infty)$. Under this assumption, for every
$c>0$, 
$$
\inf_{\gamma\in A}\left(
 (\mu 
  +c\gamma)^\prime\Sigma^{-1}(\mu   +c\gamma)\right)
= \inf_{\gamma\in A^\circ}\left( (\mu
  +c\gamma)^\prime\Sigma^{-1}(\mu   +c\gamma)\right)
$$
$$
= \left( (\mu
  +c\gamma_0)^\prime\Sigma^{-1}(\mu   +c\gamma_0)\right),
$$
and so optimizing over $c>0$ we obtain 
\begin{equation} \label{e:explicit.sG}
\lim_{ u \rightarrow \infty  }   \frac{ 1}{b_{ a^{ \leftarrow }(u)} } \log \rho(u)
= -\frac12 c_0^{-(2w-1)/w} \left( (\mu
  +c_0\gamma_0)^\prime\Sigma^{-1}(\mu   +c_0\gamma_0)\right)\,,
\end{equation}
where 
$$
c_0 = \frac{\sqrt{(2w-1)\bigl[ \bigl( \mu^\prime\, \Sigma^{-1}\mu\bigr)
\bigl( \gamma_0^\prime\, \Sigma^{-1}\gamma_0\bigr) 
- \bigl(\mu^\prime\, \Sigma^{-1}\gamma_0\bigr)^2\bigr] +w^2
    \bigl(\mu^\prime\,
    \Sigma^{-1}\gamma_0\bigr)^2}}{\bigl(\gamma_0^\prime\, 
  \Sigma^{-1}\gamma_0\bigr)}
$$
$$
-(1-w)\frac{\bigl(\mu^\prime\,
\Sigma^{-1}\gamma_0\bigr)}{\bigl(\gamma_0^\prime\,
  \Sigma^{-1}\gamma_0\bigr)}. 
$$
}
\end{remark}

\begin{theorem}\label{thm.sm.1a}
Suppose that the set $A$
  satisfies Condition $\mathcal A$ (Condition \ref{cond:A}). If  $S4$
  holds, and $ (a_{ n})\in RV_{ \omega }$  for
  some   $\omega\geq 1$, then 
\[ 
-\inf_{c>0} \left[ c^{-\nu/w}\inf_{\gamma\in A^\circ} \bigl(
  \Lambda^h\bigr)^*(\mu +c\gamma)\right]
\leq \liminf_{ u \rightarrow \infty  }   \frac{ 1}{b_{ a^{ \leftarrow
    }(u)} } \log \rho(u) 
\]
$$
\leq \limsup_{ u \rightarrow \infty  }   \frac{ 1}{b_{ a^{ \leftarrow
    }(u)} } \log \rho(u) \leq \inf_{c>0} \left[
  c^{-\nu/w}\inf_{\gamma\in {\bar A}} \bigl( 
  \Lambda^h\bigr)^*(\mu +c\gamma)\right],
$$
where 
$$
\nu = 1+ (\omega-1)\frac{\beta}{\beta-1}\,.
$$
\end{theorem}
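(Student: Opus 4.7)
The plan is to leverage the large deviation principle \eqref{e:LDP}, which under $S4$ asserts that the laws $\mu_n$ of $S_n/a_n$ satisfy the LDP at speed $b_n$ with good rate function $(\Lambda^h)^*$. The starting identity $\{Y_n\in uA\}=\{S_n/a_n\in\mu+c_nA\}$ with $c_n:=u/a_n$ means that the log-asymptotics of $P(Y_n\in uA)$ are governed by $b_n\inf_{\gamma\in A}(\Lambda^h)^*(\mu+c_n\gamma)$, and the whole game is to optimize over $n$. Since $a_n\in RV_\omega$ one has $a^{\leftarrow}\in RV_{1/\omega}$, so fixing $c>0$ and solving $c=u/a_n$ gives $n\sim c^{-1/\omega}n_u$ with $n_u:=a^{\leftarrow}(u)$. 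A Karamata-type computation using $\tau\in RV_\beta$ and the defining inequality $\tau(c_n)/c_n\le a_n/n$ from $S4$ shows that the auxiliary sequence there satisfies $c_n\in RV_{(\omega-1)/(\beta-1)}$, hence $b_n\sim n\tau(c_n)\in RV_{1+\beta(\omega-1)/(\beta-1)}=RV_\nu$. Consequently $b_n/b_{n_u}\to c^{-\nu/\omega}$ along the chosen sequence, which is precisely the scaling appearing in the theorem.

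For the \emph{lower bound}, I would fix $c>0$ and pick $n=n(u)=\lfloor c^{-1/\omega}n_u\rfloor$. Since $\rho(u)\ge P(Y_n\in uA)=\mu_n(\mu+c_nA)$ and $c_n\to c$, the star-shape part of Condition $\mathcal A$ ensures that $\mu+c_nA^\circ$ eventually contains any relatively compact subset of $\mu+cA^\circ$. Applying the lower half of \eqref{e:LDP} at speed $b_n$, dividing by $b_{n_u}$, and using $b_n/b_{n_u}\to c^{-\nu/\omega}$, one obtains
$$
\liminf_{u\to\infty}\frac{1}{b_{n_u}}\log\rho(u)\ge -c^{-\nu/\omega}\inf_{\gamma\in A^\circ}(\Lambda^h)^*(\mu+c\gamma),
$$
and taking the supremum over $c>0$ delivers the lower bound.

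For the \emph{upper bound}, the natural first step is the union bound $\rho(u)\le\sum_{n\ge 1}P(Y_n\in uA)$ together with the upper half of \eqref{e:LDP}. Converting this pointwise statement into a usable bound on the entire sum is where I would invoke the multivariate Nyrhinen-type estimate in the Appendix, which is tailored to deliver
$$
\frac{1}{b_{n_u}}\log\rho(u)\le -\inf_{n\ge 1}\frac{b_n}{b_{n_u}}\inf_{\gamma\in\bar A}(\Lambda^h)^*\bigl(\mu+(u/a_n)\gamma\bigr)+o(1).
$$
Condition $\mathcal A$ enters at exactly this step: closure of $A$ under $x\mapsto x+\rho\mu$ and $x\mapsto(1+\rho)x$ provides the monotonicity/nestedness of the events $\{Y_n\in uA\}$ required by the appendix result, while the first item of $\mathcal A$ supplies a halfspace $\{y:\,t\cdot y>\inf_{\gamma\in A}t\gamma\}$ containing $A$ whose associated Chernoff bound (finite thanks to the exponential moments under $S4$) controls both the small-$n$ and very-large-$n$ tails of the sum. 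The change of variables from the first paragraph then converts the discrete $\inf_n$ into $\inf_{c>0}\bigl[c^{-\nu/\omega}\inf_{\gamma\in\bar A}(\Lambda^h)^*(\mu+c\gamma)\bigr]$.

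The main obstacle is this uniform control across $n$ in the upper bound: a single application of the LDP produces $o(b_n)$ errors that depend on $n$, and one must absorb them simultaneously over a range of $n$ growing with $u$. This is exactly what the appendix lemma is designed for, and its hypotheses are what dictate the shape of Condition $\mathcal A$. Once that input is in place, what remains is routine: check the continuity and coercivity of the map $c\mapsto c^{-\nu/\omega}\inf_{\gamma\in\bar A}(\Lambda^h)^*(\mu+c\gamma)$ on $(0,\infty)$, using the $\beta/(\beta-1)$-homogeneity of $(\Lambda^h)^*$ and the separation of $A$ from the origin provided by $\mathcal A$, to ensure that the infimum is attained at a finite positive $c_\ast$ identifying the optimal length of the ruin trajectory.
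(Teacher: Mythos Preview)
Your lower bound argument is essentially the paper's: fix $c>0$, choose $n\sim c^{-1/\omega}a^{\leftarrow}(u)$, apply the lower half of the marginal LDP, and use regular variation of $(b_n)$ with exponent $\nu$ to pull out the factor $c^{-\nu/\omega}$. The regular-variation bookkeeping you do for $(c_n)$ and $(b_n)$ is correct.

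The upper bound, however, has a genuine gap. The Appendix result (Theorem~\ref{t:nyrh3.1}) is \emph{not} tailored to deliver the displayed inequality you write; it is a Nyrhinen-type estimate at \emph{linear} speed $u$, producing a bound of the form $-\sup_{t\in\mathcal D}\inf_{\gamma\in A}t\gamma$, and in the paper it is invoked only for the $S1$ case (Theorem~\ref{thm:ma:rp:sm}). It says nothing about control at speed $b_{a^{\leftarrow}(u)}$, and it does not output anything involving $(\Lambda^h)^*$. So the step where you ``invoke the multivariate Nyrhinen-type estimate in the Appendix'' to absorb the $o(b_n)$ errors uniformly in $n$ is exactly the missing idea, not a citation.

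What the paper actually does for the upper bound is different in kind. It splits the ruin time into three ranges $\{T\le N\}$, $\{N<T\le nM\}$, $\{nM<T<\infty\}$; the two tail ranges are killed by an exponential Markov inequality together with Lemma~3.5(iii) of \cite{ghosh:samorodnitsky:2009} (this is where the first item of Condition~$\mathcal A$ enters). For the main range $\{N<T\le nM\}$ the paper does \emph{not} sum marginal LDP bounds; instead it rewrites the event in terms of the polygonal process $Y_{nM}(\cdot)$ and applies the \emph{functional} LDP (Theorem~2.2 of \cite{ghosh:samorodnitsky:2009}) to a single set $B$ in path space, which is how uniformity in $n$ is obtained. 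The second item of Condition~$\mathcal A$ is used, via Potter bounds, to replace the random ratios $a_{nMt}/a_{nM}$ by $(1-\delta)t^{\omega+\delta}$ inside $B$. The resulting variational problem over absolutely continuous paths is then collapsed using the $\beta/(\beta-1)$-homogeneity of $(\Lambda^h)^*$ and Jensen's inequality,
\[
\int_0^1(\Lambda^h)^*\bigl(f'(t)\bigr)\,dt\ \ge\ t_0^{-1/(\beta-1)}(\Lambda^h)^*\bigl(f(t_0)\bigr),
\]
after which the change of variables $c=(t_0M)^{-(\omega+\delta)}$ and $\delta\downarrow 0$ produce the stated infimum over $c>0$. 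Your sketch never reaches this functional step, and the tool you name in its place does not do the job.
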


\begin{remark}\label{rem:explicit.sH}
{\rm
Once again, in certain cases the statement of Theorem \ref{thm.sm.1a} takes 
a very explicit form. Let us suppose, for example, that 
\begin{equation} \label{e:gamma.0a}
\text{there is} \ \gamma_0\in \overline{\bigl( A^\circ\bigr)}  
\ \ \text{such that}
\end{equation}
$$
\|\gamma\| \geq \|\gamma_0\| 
\ \ \text{and}
\ \ \mu^\prime\, \bigl(\gamma-\gamma_0\bigr)\geq 0
\ \ \text{for all $\gamma\in \overline{A}$.}
$$
Suppose, further, that for some $a>0$ the function $\Lambda$ satisfies 
\begin{equation} \label{e:nice.Lambda}
\zeta_\Lambda(\lambda) = a \ \ \text{for any unit vector $\lambda$ such that}
\end{equation}
$$
\lambda\mu>0 \ \ \text{or $\lambda\gamma>0$ for some $\gamma\in A$.}
$$
Again, this would be the the situation in the one-dimensional case
with $\mu>0$ and $A=(1,\infty)$. Under the assumption
\eqref{e:nice.Lambda}, 
$$
\bigl(\Lambda^h\bigr)^*(\mu +c\gamma) =
K_\beta \|\mu +c\gamma\|^{\beta/(\beta-1)}
$$
for any $c>0$ and $\gamma\in \overline{A}$, with 
$$
K_\beta= (\beta-1)\bigl( a\beta^\beta\bigr)^{1/(1-\beta)}.
$$
This, together with the assumption \eqref{e:gamma.0a}, 
implies that, for any $c>0$, 
$$
\inf_{\gamma\in A^\circ} \bigl( \Lambda^h\bigr)^*(\mu +c\gamma)
= \inf_{\gamma\in {\bar A}} \bigl( 
  \Lambda^h\bigr)^*(\mu +c\gamma) = K_\beta \|\mu
  +c\gamma_0\|^{\beta/(\beta-1)}.
$$
Optimizing over $c>0$ we obtain
\begin{equation} \label{e:explicit.sH}
\lim_{ u \rightarrow \infty  }   \frac{ 1}{b_{ a^{ \leftarrow }(u)} } \log \rho(u)
= - K_\beta c_0^{-\nu/w} \|\mu   +c_0\gamma_0\|^{\beta/(\beta-1)}\,,
\end{equation} 
where
$$
c_0 = \frac{\sqrt{4(\beta w-1)\bigl(
   \| \mu\|^2\|\gamma_0\|^2-(\mu\gamma_0)^2\bigr) +\beta^2
    w^2(\mu\gamma_0)^2}+ (\beta w-2)\mu\gamma_0}{2\|\gamma_0\|^2}.
$$
}
\end{remark}

We now turn to the asymptotic behavior of the ruin probabilities in
the long memory regimes. In all 3 theorems we assume that the set $A$
satisfies Condition $\mathcal A$. Note in the following theorem $ b_{ n}=n$ and therefore $ b_{ a^{ \leftarrow }(u)}$ reduces to $ a^{ \leftarrow }(u)$.

\begin{theorem}\label{thm:ma:rp:lm} 
Suppose that the set $A$
  satisfies Condition $\mathcal A$ (Condition \ref{cond:A}). If $R2$
  holds, then  
\[
-\inf_{ c>0}c^{ \frac{ 1}{\alpha -2 } }\inf_{\gamma\in A^\circ}
(\Lambda _{ \alpha })^{ *}\big(\mu+c\gamma\big)\leq 
	\liminf_{ u \rightarrow \infty   } \frac{ 1}{ a^{ \leftarrow
          }(u)} \log \rho( u)  
\]
\begin{eqnarray*}
	&\leq & \limsup_{ u \rightarrow \infty   } \frac{1}{a^{
      \leftarrow }(u)}\log \rho( u)\\ 
	 & \le&\left\{\begin{array}{ll} \inf\limits_{ t\in G}
  \sup\limits_{ u> 0}  \left\{-u^{ \alpha -1}\inf_{\gamma\in A} t\gamma
 +u\Big( \Lambda _{\alpha }(t)-t\mu \Big)  \right\} & \mbox{if }\alpha
 <1 \\ -\inf_{c>0}c^{ -1 }\inf_{\gamma\in \bar A} 
\Lambda^{ *}\big(\mu+c\gamma\big)&
  \mbox{if } \alpha =1.\end{array} \right. 
\end{eqnarray*}
where 
$$
G=\{  t\in  \bbr^d:\, t\mu>0, \ \inf_{\gamma\in A} t\gamma>0
\ \ \text{and} \ \ \Lambda _{ \alpha }(t)-\mu t<0\},
$$ 
and $\Lambda_{ \alpha }(\cdot) $ is defined in \eqref{eq:falpha}. 
\end{theorem}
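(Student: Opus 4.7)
\emph{Lower bound.} My plan is to combine the trivial inequality $\rho(u)\ge P(Y_n\in uA)$ for a single optimally chosen $n$ with the marginal large deviation principle \eqref{e:LDP}--\eqref{e:rate.f} for $S_n/a_n$ under $R2$ (speed $b_n=n$, rate function $(\Lambda_\alpha)^*$). For arbitrary $c>0$, take $n=n(u,c)$ so that $a_n\sim u/c$; since $(a_n)\in RV_{2-\alpha}$, this forces $n/a^{\leftarrow}(u)\to c^{1/(\alpha-2)}$. Rewriting $\{Y_n\in uA\}=\{S_n/a_n\in\mu+(u/a_n)A\}$ and noting that $u/a_n\to c$, for any $\gamma_0\in A^\circ$ the latter set eventually contains a fixed small open ball around $\mu+c\gamma_0$. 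The LDP lower bound then gives
\[
\liminf_{u\to\infty}\frac{1}{n}\log P(Y_n\in uA)\ge -(\Lambda_\alpha)^*(\mu+c\gamma_0),
\]
and multiplying by $n/a^{\leftarrow}(u)\to c^{1/(\alpha-2)}$, followed by optimization over $\gamma_0\in A^\circ$ and $c>0$, yields the claimed lower bound.

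\emph{Upper bound.} I would combine the union bound $\rho(u)\le\sum_{n\ge 1}P(Y_n\in uA)$ with the exponential Chebyshev inequality at scale $v=1/\Psi_n$. For any $t\in\mathbb{R}^d$ with $\inf_{\gamma\in A}t\gamma>0$, the asymptotic $\sum_i\Lambda((t/\Psi_n)\phi_{i,n})=n\Lambda_\alpha(t)+o(n)$ (which follows from \eqref{e:same.limit} and Remark 3.7 of \cite{ghosh:samorodnitsky:2009}) produces
\[
\log P(Y_n\in uA)\le -\frac{u}{\Psi_n}\inf_{\gamma\in A}t\gamma+n\bigl(\Lambda_\alpha(t)-t\mu\bigr)+o(n).
\]
Set $m=a^{\leftarrow}(u)$ and parametrize $n=ym$; by regular variation $\Psi_n/\Psi_m\to y^{1-\alpha}$, so that $u/\Psi_n\sim my^{\alpha-1}$. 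If $\alpha<1$ and $t\in G$, the display becomes $\log P(Y_n\in uA)/m\le g_t(y)+o(1)$ with $g_t(y)=-y^{\alpha-1}\inf_\gamma t\gamma+y(\Lambda_\alpha(t)-t\mu)$, strictly negative and with a unique interior maximum; a Laplace-type estimate on $\sum_n\exp(m g_t(n/m))$ yields $\log\rho(u)/m\le\sup_{y>0}g_t(y)+o(1)$, and infimization over $t\in G$ matches the stated bound. If $\alpha=1$, $\Psi_n$ is slowly varying so $u/\Psi_n\sim m$ uniformly on a broad range of $n$; the bound reads $-m\inf_\gamma t\gamma+n(\Lambda(t)-t\mu)+o(n)$, and since $\Lambda(t)-t\mu<0$ for $t\in G$ the sum over $n$ is a geometric series, giving $\log\rho(u)/m\le-\inf_\gamma t\gamma+o(1)$. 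A convex-duality/minimax computation using Condition $\mathcal A$ then identifies $-\sup_{t\in G}\inf_{\gamma\in A} t\gamma$ with $-\inf_{c>0}c^{-1}\inf_{\gamma\in\bar A}\Lambda^*(\mu+c\gamma)$.

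\emph{Main obstacle.} The principal technical obstacle is uniform control of the Chernoff bound over the full range of $n\ge 1$: the convergence $n^{-1}\sum_i\Lambda((t/\Psi_n)\phi_{i,n})\to\Lambda_\alpha(t)$ must be quantitative enough to survive the union bound across both the short-time regime ($n\ll m$, where direct exponential tail estimates on $S_n$ are needed) and the long-time regime ($n\gg m$, where the drift $-a_n\mu$ dominates but the regular-variation approximations become delicate near the boundary). A secondary but nontrivial step is the convex-analytic identification of the two upper-bound forms in the $\alpha=1$ case.
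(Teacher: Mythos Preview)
Your lower bound is exactly the paper's: choose $u=ca_n$, invoke the marginal LDP under $R2$ with speed $b_n=n$, and use regular variation of $(a_n)\in RV_{2-\alpha}$ to produce the factor $c^{1/(\alpha-2)}$. Your $\alpha<1$ upper bound is also essentially the paper's: the authors fix $t\in G$, apply the exponential Chebyshev inequality at scale $1/\Psi_k$, control $k^{-1}\log E[e^{tS_k/\Psi_k}]$ via Lemma~3.6(i) of \cite{ghosh:samorodnitsky:2009}, and then discretize the sum over $k$ into blocks of length $[n\delta]$ together with Potter bounds for $\Psi_n/\Psi_{(i+1)[n\delta]}$. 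Your ``Laplace-type estimate on $\sum_n\exp(mg_t(n/m))$'' is precisely this block argument; the obstacle you flag (uniform control across the short- and long-time regimes) is the content of the paper's separate treatment of $k\le n\delta$ versus $k>n\delta$.

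The gap is in the $\alpha=1$ upper bound. Your Chernoff route yields at best
\[
\limsup_{u\to\infty}\frac{1}{a^{\leftarrow}(u)}\log\rho(u)\;\le\;-\sup_{t\in G}\ \inf_{\gamma\in A}t\gamma,
\]
and you then assert that a ``convex-duality/minimax computation using Condition~$\mathcal A$'' identifies the right-hand side with $-\inf_{c>0}c^{-1}\inf_{\gamma\in\bar A}\Lambda^*(\mu+c\gamma)$. But weak duality only gives
\[
\sup_{t\in G}\ \inf_{\gamma\in A}t\gamma
\;\le\;
\inf_{c>0,\ \gamma\in\bar A}\ \sup_{t}\Bigl[c^{-1}\bigl(t\mu-\Lambda(t)\bigr)+t\gamma\Bigr]
\;=\;
\inf_{c>0}c^{-1}\inf_{\gamma\in\bar A}\Lambda^*(\mu+c\gamma),
\]
which is the \emph{wrong} direction: it says your Chernoff bound may be \emph{weaker} than the one claimed in the theorem. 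To get equality you would need a minimax theorem, and Condition~$\mathcal A$ does not make $A$ convex, so strong duality is not available. The paper avoids this entirely: for $\alpha=1$ it does \emph{not} proceed via Chernoff but instead (after disposing of $T(a_n)>nM$ exactly as in \eqref{eq:al=1ptlast}) applies the \emph{functional} large deviation principle of Theorem~2.4(ii) in \cite{ghosh:samorodnitsky:2009} to the process $(Y_{nM}(t))_{0\le t\le 1}$. The rate-function form $\inf_{c>0}c^{-1}\inf_{\gamma\in\bar A}\Lambda^*(\mu+c\gamma)$ then drops out directly from optimizing $\int_0^1\Lambda^*(f'(t))\,dt$ over paths hitting $(1-\delta)(t_0^{1+\delta}\mu+M^{-(1+\delta)}\bar A)$, using only convexity of $\Lambda^*$ (Jensen on $[0,t_0]$) and its super-linear growth to kill $t_0\to0$. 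No minimax swap is needed.
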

Observe that the set $G$ in the above theorem is not empty because of
Condition $\mathcal A$ and the fact that $|\Lambda _{ \alpha }(t)|\leq
c|t|^2$ for $t$ in a neighborhood of the origin. 

To state the next two theorems we introduce the notation
\begin{equation} \label{e:C}
 	C_{ \alpha ,\beta }= \left\{ \begin{array}{ll}    (1-\alpha
          )^{ \beta }   \int\limits_{ - \infty }^{ \infty } \Big(
          \int\limits_{ x}^{ x+1} \big| y \big|^{ -\alpha }\big( pI_{[
              y\ge 0]}+qI_{[  y<0]} \big)dy \Big)^{ \beta } dx &
          \mbox{if } \alpha <1\\ 1 & \mbox{if } \alpha
          =1\\ \end{array} \right.
\end{equation}
for $ 1/2<\alpha \le 1$ and $ \beta >1$. 

\begin{theorem}\label{thm:ma:rp:lm.1}
Suppose that the set $A$
  satisfies Condition $\mathcal A$ (Condition \ref{cond:A}). If $R3$
  holds, and $ (a_{ n})\in RV_{ \omega }$ for  some $3/2-\alpha<
  \omega\leq 2-\alpha$, then 
$$
-\frac{1}{C_{ \alpha ,2}}\inf_{c>0} \left[
  c^{-2+(3-2\alpha)/w}\inf_{\gamma\in A^\circ}\left( \frac12(\mu 
  +c\gamma)^\prime\Sigma^{-1}(\mu   +c\gamma)\right)\right]
\leq \liminf_{ u \rightarrow \infty  }   \frac{ 1}{b_{ a^{ \leftarrow
    }(u)} } \log \rho(u) 
$$
\begin{eqnarray*}
&\leq & \limsup_{ u \rightarrow \infty  }   \frac{ 1}{b_{ a^{ \leftarrow
    }(u)} } \log \rho(u) \\
&\leq& \left\{\begin{array}{ll} -K_{\alpha,\omega} \sup\limits_{ t\in G}
 \Bigl[\bigl( t\mu -\frac12
  C_{\alpha,2}\, t^\prime \Sigma t\bigr)^{-1+(3-2\alpha)/\omega}
\bigl(  \inf_{\gamma\in   A}  t\gamma\bigr)^{2-(3-2\alpha)/\omega}\Bigr] 
& \mbox{if }\alpha <1 \\
-\frac{1}{C_{ \alpha ,2}}\inf_{c>0} \left[
  c^{-2+(3-2\alpha)/w}\inf_{\gamma\in A}\left( \frac12(\mu 
  +c\gamma)^\prime\Sigma^{-1}(\mu   +c\gamma)\right)\right]&
  \mbox{if } \alpha =1\end{array}\right.,
\end{eqnarray*}
where
$$
G=\{  t\in  \bbr^d:\, t\mu>0, \ \inf_{\gamma\in A} t\gamma>0
\ \ \text{and} \ \ \frac12 C_{\alpha,2}\, t^\prime\Sigma t-\mu t<0\},
$$ 
and
$$
K_{\alpha,\omega}= \frac{w\bigl(
  3-2\alpha-\omega\bigr)^{1-(3-2\alpha)/\omega}}{\bigl(
  2(\alpha+\omega)-3\bigr)^{2-(3-2\alpha)/\omega}}. 
$$
\end{theorem}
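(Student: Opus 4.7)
The proof follows the pattern of Theorems \ref{thm:ma:rp:sm}--\ref{thm.sm.1a} and combines the LDP \eqref{e:LDP} under $R3$ (for which $I_l = I_u = ((G_\Sigma)_\alpha)^*(x) = (2C_{\alpha,2})^{-1}x^\prime \Sigma^{-1}x$) with a Chernoff-type summation over the time index. Write $\tilde\omega := 2\omega+2\alpha-3 > 0$ (positive by the hypothesis $\omega > 3/2-\alpha$); then $b_n \in RV_{\tilde\omega}$ and, since $a^\leftarrow \in RV_{1/\omega}$, one has the scaling $b_{a^\leftarrow(u/c)}/b_{a^\leftarrow(u)} \to c^{-\tilde\omega/\omega} = c^{-2+(3-2\alpha)/\omega}$ for each fixed $c > 0$.

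\textbf{Lower bound.} Fix $c > 0$ and set $n_u := a^\leftarrow(u/c)$, so that $u/a_{n_u} \to c$. Then $\rho(u) \geq P[a_{n_u}^{-1}S_{n_u} \in \mu + (u/a_{n_u})A]$, and the LDP lower bound in \eqref{e:LDP} together with continuity of the rate function on its domain give
$$
\liminf_{u\to\infty}\frac{\log\rho(u)}{b_{n_u}} \;\geq\; -\inf_{\gamma\in A^\circ}\frac{1}{2C_{\alpha,2}}(\mu+c\gamma)^\prime\Sigma^{-1}(\mu+c\gamma).
$$
Dividing by the ratio $b_{n_u}/b_{a^\leftarrow(u)}$ and then infimizing over $c > 0$ produces the stated lower bound in both the $\alpha<1$ and $\alpha=1$ cases.

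\textbf{Upper bound, $\alpha < 1$.} Fix $t \in G$, write $p := \inf_{\gamma\in A}t\gamma > 0$ and $D(t) := t\mu - (G_\Sigma)_\alpha(t) > 0$, and apply Chernoff with the $n$-dependent tilt $s_n := b_n/a_n$. Under $R3$, $s_n\Psi_n = a_n/(n\Psi_n) \to 0$, so $s_n t\phi_{i,n}$ is uniformly small in $i$ (recall that $|\phi_{i,n}| = O(\Psi_n)$), and a second-order Taylor expansion of $\Lambda$ at the origin is valid. Combining this with the long-memory asymptotic $\sum_i \phi_{i,n}^2 \sim C_{\alpha,2}n\Psi_n^2$ (the computation underlying \eqref{eq:falpha}) and the identity $s_n^2 n\Psi_n^2 = b_n$ yields $\sum_i \Lambda(s_n t\phi_{i,n}) = b_n (G_\Sigma)_\alpha(t)(1+o(1))$ and $s_n a_n t\mu = b_n t\mu$, hence
$$
P[Y_n \in uA] \;\leq\; P[tY_n \geq up] \;\leq\; \exp\bigl(-b_n[D(t) + (u/a_n)p](1+o(1))\bigr).
$$
The sum over $n$ is Laplace-dominated: the term decays exponentially in $b_n$ for large $n$ because $D(t) > 0$, and super-exponentially for small $n$ because $u/a_n \to \infty$. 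Using $b_{a^\leftarrow(u/c)} \sim b_{a^\leftarrow(u)}c^{-2+(3-2\alpha)/\omega}$ reduces the logarithmic asymptotic to $-\inf_{c>0}c^{-2+(3-2\alpha)/\omega}[D(t)+cp]$. A direct first-order condition gives the optimizer $c^* = \tilde\omega D(t)/((3-2\alpha-\omega)p)$, and the identity $(3-2\alpha-\omega)+\tilde\omega = \omega$ simplifies the optimal value to $K_{\alpha,\omega}D(t)^{-1+(3-2\alpha)/\omega}p^{2-(3-2\alpha)/\omega}$. Taking the supremum over $t \in G$ completes the upper bound.

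\textbf{Upper bound $\alpha=1$, and main obstacle.} When $\alpha = 1$ one has $C_{1,2} = 1$ and $(G_\Sigma)_1 = G_\Sigma$ is globally quadratic, so a Sion-type minimax argument gives the equality $\sup_t \inf_c\{D(t) + cp(t)\} = \inf_c \inf_{\gamma\in A}((G_\Sigma)_1)^*(\mu + c\gamma)$, and the computation above produces the matching upper bound (with $A^\circ$ replaced by $A$). The main technical obstacle throughout the argument is the uniform-in-$n$ control of the Taylor remainder in $\sum_i \Lambda(s_n t\phi_{i,n})$, especially at the indices $i$ where $|\phi_{i,n}|$ peaks at order $\Psi_n$; the $R3$ growth condition $a_n/(n\Psi_n) \to 0$ is exactly what guarantees $s_n t\phi_{i,n} \to 0$ uniformly in $i$, and the requisite uniform bounds have already been developed in \cite{ghosh:samorodnitsky:2009} for the proof of the LDP under $R3$, so they can be invoked here.
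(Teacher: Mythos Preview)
Your lower bound and your $\alpha<1$ upper bound are correct and follow the paper's approach: the paper refers the $\alpha<1$ upper bound back to the argument of Theorem~\ref{thm:ma:rp:lm}, which is precisely the Chernoff-plus-summation scheme you outline, made rigorous there via a $\delta$-block splitting and Potter bounds rather than a Laplace heuristic. Your identification of the Taylor-remainder control as the main analytic point, and your appeal to \cite{ghosh:samorodnitsky:2009} for it, are also in line with what the paper does.

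The gap is in your treatment of the case $\alpha=1$. The Chernoff route yields at best $\limsup\le -\sup_{t\in G}\inf_{c>0}c^{-\eta}[D(t)+cp(t)]$, and you then invoke a ``Sion-type minimax'' to identify this with $-\inf_{c>0}c^{-\eta}\inf_{\gamma\in A}(G_\Sigma)^*(\mu+c\gamma)$. But Sion's theorem requires convexity of the $\gamma$-domain (or at least quasi-convexity in $\gamma$ together with compactness of one of the domains), and Condition~$\mathcal A$ gives neither: it only imposes a cone-like monotonicity on $A$. Without the minimax swap you are left with the weak direction $\sup_t\inf_c\le\inf_c\sup_t$, hence a possibly strictly weaker upper bound than the theorem asserts. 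The paper avoids this issue entirely: for $\alpha=1$ it does not push the Chernoff argument but instead uses the \emph{functional} large deviation principle exactly as in the proof of Theorem~\ref{thm.sm.1} (working with the path event $\{Y_{nM}\in B\}$ and then the convexity bound $\int_0^1 I_l(f')\ge t_0^{-1}I_l(f(t_0)/t_0)$). That method lands directly on the expression $\inf_{c}\inf_{\gamma\in \bar A}\frac12(\mu+c\gamma)'\Sigma^{-1}(\mu+c\gamma)$ without any duality step, which is why the $\alpha=1$ bound in the statement is phrased in terms of the rate function rather than in the $\sup_t$ form of the $\alpha<1$ case.
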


\begin{remark}\label{rem:explicit.rG}
{\rm 
It is easy to check that in the one-dimensional case
with $\mu>0$, $A=(1,\infty)$ and $\Sigma = \sigma^2$, the statement
of the theorem gives the explicit limit
\[
	 \lim_{ u \rightarrow \infty  } \frac{ 1}{b_{ a^{ \leftarrow
             }(u)} }\log \rho(u) = - \frac{ \big( 2(\omega +\alpha )-3
           \big)^{ \frac{ 3-2(\omega +\alpha )}{\omega  }} }{ \big(
           3-2\alpha  \big)^{  \frac{ 3-2\alpha }{\omega  }}  } \frac{
           2}{\sigma ^{ 2}C_{ \alpha ,2} }\omega^{ 2} \mu^{ \frac{
             3-2\alpha }{ \omega }} . 
\]
One can check that under certain assumptions similar explicit
expressions can be obtained in the multivariate case as well. 
}
\end{remark}

\begin{theorem}\label{thm:ma:rp:lm.1a}
Suppose that the set $A$
  satisfies Condition $\mathcal A$ (Condition \ref{cond:A}). If $R4$
  holds, and $ (a_{ n})\in RV_{ \omega }$ for  some $\omega\geq
  2-\alpha$, $\omega\not= \beta(1-\alpha)+1$, then  
$$
-\inf_{c>0} \left[
  c^{-(\beta(\omega+\alpha-1)-1)/\omega(\beta-1)}\frac{\inf_{\gamma\in
    A^\circ} \bigl(   \Lambda^h\bigr)^*(\mu +c\gamma)}{\bigl(C_{ \alpha
    ,\beta}\bigr)^{1/(\beta-1)}}\right]
$$
$$
\leq \liminf_{ u \rightarrow \infty  }   \frac{ 1}{b_{ a^{ \leftarrow
    }(u)} } \log \rho(u) \leq \limsup_{ u \rightarrow \infty  }
\frac{ 1}{b_{ a^{ \leftarrow     }(u)} } \log \rho(u)
$$
$$
\leq \left\{\begin{array}{ll} -K^{(1)}_{\alpha,\beta,\omega} \sup\limits_{ t\in G^{(1)}} 
\left[\frac{\bigl( t\mu - 
  C_{\alpha,\beta}\,\Lambda^h(t)\bigr)^{(\beta(1-\alpha)+1-\omega)/\omega(\beta-1)}}{
\bigl(  \inf_{\gamma\in   A}
t\gamma\bigr)^{(1-\beta(\omega+\alpha-1))/\omega(\beta-1)}}\right]
& \mbox{if } \ \omega<\beta(1-\alpha)+1 \\
- \sup\limits_{ t\in G^{(2)}} \left[ \inf_{\gamma\in   A} t\gamma
- K^{(2)}_{\alpha,\beta,\omega}\frac{\bigl(
  C_{\alpha,\beta}\,\Lambda^h(t)\bigr)^{\omega/(\omega-1-\beta(1-\alpha))}}{(t\mu)^{(1+
\beta(1-\alpha))/(\omega-1-\beta(1-\alpha))}}\right] & \mbox{if }\ 
\omega>\beta(1-\alpha)+1 \end{array}\right. 
$$
if $\alpha<1$, and 
$$
\leq -\inf_{c>0} \left[
  c^{-(\beta\omega-1)/\omega(\beta-1)}
\inf_{\gamma\in
    \bar A} \bigl(   \Lambda^h\bigr)^*(\mu +c\gamma)\right]
$$
if $\alpha=1$. Here 
$$
G^{(1)}=\{  t\in  \bbr^d:\, t\mu>0, \ \inf_{\gamma\in A} t\gamma>0 
\ \ \text{and} \ \  C_{\alpha,\beta}\, \Lambda^h(t)-\mu t<0\},
$$  
$$
G^{(2)}=\left\{  t\in  \bbr^d:\, t\mu>0, \ \ \inf_{\gamma\in A} t\gamma>
K^{(2)}_{\alpha,\beta,\omega}\frac{\bigl(
  C_{\alpha,\beta}\,\Lambda^h(t)\bigr)^{\omega/(\omega-1-\beta(1-\alpha))}}{(t\mu)^{(1+
\beta(1-\alpha))/(\omega-1-\beta(1-\alpha))}}\right\}, 
$$
and 
$$
K^{(1)}_{\alpha,\beta,\omega}= \frac{\omega(\beta-1)\bigl(
\beta(1-\alpha)+1-\omega\bigr)^{-(\beta(1-\alpha)+1-\omega)/\omega(\beta-1)}}{\bigl( 
  \beta(\omega+\alpha-1)-1\bigr)^{(\beta(\omega+\alpha-1)-1)/\omega(\beta-1)} 
}, 
$$
$$
K^{(2)}_{\alpha,\beta,\omega}= \frac{\bigl(
  \omega-1-\beta(1-\alpha)\bigr)
\bigl( 1+\beta(1-\alpha)\bigr)^{(1+
\beta(1-\alpha))/(\omega-1-\beta(1-\alpha))}}{\omega^{\omega/(\omega
-1-\beta(1-\alpha))}}. 
$$
\end{theorem}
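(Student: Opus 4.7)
For the lower bound, fix $c>0$ and set $n(u)=a^{\leftarrow}(u/c)$, so that $u/a_{n(u)}\to c$. Then
\[
\rho(u)\geq P\big[Y_{n(u)}\in uA\big] = P\bigl[a_{n(u)}^{-1}S_{n(u)}\in\mu+(u/a_{n(u)})A\bigr],
\]
and the large deviation lower bound in \eqref{e:LDP}--\eqref{e:rate.f} under $R4$ gives
\[
\liminf_{u\to\infty}\frac{1}{b_{n(u)}}\log\rho(u)\geq-\inf_{\gamma\in A^\circ}\bigl((\Lambda^h)_\alpha\bigr)^*(\mu+c\gamma).
\]
Because the integrand in \eqref{eq:falpha} is nonnegative and $\Lambda^h$ is positively homogeneous of degree $\beta$, one reads off $(\Lambda^h)_\alpha=C_{\alpha,\beta}\Lambda^h$ directly from \eqref{e:C}, and homogeneity of the Legendre transform then gives $\bigl((\Lambda^h)_\alpha\bigr)^*=C_{\alpha,\beta}^{-1/(\beta-1)}(\Lambda^h)^*$. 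Since $b_n\sim c_n a_n$ is regularly varying in $n$ with index $(\beta(\omega+\alpha-1)-1)/(\beta-1)$, we have $b_{n(u)}/b_{a^{\leftarrow}(u)}\to c^{-(\beta(\omega+\alpha-1)-1)/(\omega(\beta-1))}$, and optimizing over $c>0$ delivers the claimed lower bound.

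For the upper bound I would invoke the multivariate Nyrhinen-type estimate of the Appendix. For any $t\in\bbr^d$ with $t\mu>0$ and $\inf_{\gamma\in A}t\gamma>0$, a union bound followed by exponential Markov with a tilt $s_n>0$ gives
\[
\rho(u)\leq\sum_{n=1}^\infty\exp\Bigl(-s_n\bigl(u\inf_{\gamma\in A}t\gamma+a_n t\mu\bigr)+\sum_{i\in\mathbb{Z}}\Lambda(s_n t\phi_{i,n})\Bigr).
\]
Under $R4$ the cumulant-sum limit underlying the LDP of \cite{ghosh:samorodnitsky:2009},
\[
\lim_n\frac{1}{b_n}\sum_{i\in\mathbb{Z}}\Lambda\Bigl(\frac{b_n}{a_n}\lambda\phi_{i,n}\Bigr)=(\Lambda^h)_\alpha(\lambda)=C_{\alpha,\beta}\Lambda^h(\lambda),
\]
applied with the natural scaling $s_n=r\,b_n/a_n$ and $r$ optimized over $(0,\infty)$, reduces the bound on each summand to $\exp\bigl(-b_n\cdot\tfrac{\beta-1}{\beta}(t\mu+(u/a_n)\inf t\gamma)^{\beta/(\beta-1)}/(\beta C_{\alpha,\beta}\Lambda^h(t))^{1/(\beta-1)}\bigr)$, and the sum over $n$ is controlled by its dominant term.

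The splitting at the critical exponent $\omega=\beta(1-\alpha)+1$ reflects where this dominant $n$ sits once we also optimize over the direction $t$, which is subject to the supermartingale-type constraint $C_{\alpha,\beta}\Lambda^h(t)<t\mu$. Writing $y=u/a_n$ and extremizing in $y>0$ the product of $y^{-q}$ (the regular-variation translation of $b_n$ via $a^{\leftarrow}$, with $q=(\beta(\omega+\alpha-1)-1)/(\omega(\beta-1))$) against the optimized Markov bound, for $\omega<\beta(1-\alpha)+1$ the extremum is attained at an interior $y^\star(t)$ that is compatible with the supermartingale constraint, producing the explicit product form with $K^{(1)}_{\alpha,\beta,\omega}$ and the feasibility set $G^{(1)}$; for $\omega>\beta(1-\alpha)+1$ the constraint binds before the unconstrained optimum is reached and one must tilt the martingale differently, producing the Legendre-type expression with $K^{(2)}_{\alpha,\beta,\omega}$ and $G^{(2)}$. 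The case $\alpha=1$ is an immediate specialization because $C_{1,\beta}=1$ and $\Lambda_1=\Lambda$. The main obstacle is securing the uniform replacement of $\sum_i\Lambda(s_n t\phi_{i,n})$ by $b_n C_{\alpha,\beta}\Lambda^h(ts_n a_n/b_n)$ across the entire range of $n$ in the union bound; this requires truncating outside the window $|i|\leq n^2$ in the spirit of \eqref{e:same.limit} and controlling the deviation of $\Lambda$ from $\Lambda^h$ in the moderate-tilt regime where $s_n\phi_{i,n}$ is only mildly large. Once this uniformity is in hand, identifying $n^\star(u)$ and reading off the phase transition reduces to a direct calculus exercise.
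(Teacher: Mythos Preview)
Your lower bound is fine and matches the paper's argument: fix $c>0$, look at the single time $n(u)$ with $a_{n(u)}\sim u/c$, apply the marginal LDP under $R4$, use $(\Lambda^h)_\alpha=C_{\alpha,\beta}\Lambda^h$ and the regular variation of $(b_n)$ with index $\nu=(\beta(\omega+\alpha-1)-1)/(\beta-1)$, then optimize over $c$.

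The upper bound, however, is where your proposal and the paper diverge, and your narrative for how the theorem's specific formulas arise is not right. The paper does \emph{not} optimize the scalar tilt. It fixes a direction $t$ and uses the exponential Markov inequality with tilt of order $b_k/a_k$ (equivalently $1/\Psi_k$) but with coefficient $1$, so that the large-$k$ contribution to the union bound is governed by
\[
\lim_{k\to\infty}\frac{1}{b_k}\log E\bigl[e^{(b_k/a_k)tS_k}\bigr]-t\mu
= C_{\alpha,\beta}\Lambda^h(t)-t\mu.
\]
The requirement that this be negative is exactly the ``supermartingale'' constraint defining $G^{(1)}$; it is what makes the tail of the sum over $k$ summable, and it is why the combination $t\mu-C_{\alpha,\beta}\Lambda^h(t)$ appears in the upper bound. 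The dichotomy $\omega\lessgtr\beta(1-\alpha)+1$ is simply $\omega-\nu\lessgtr 0$: when $\omega-\nu>0$ the ratio $a_n/b_n$ is increasing and the discretization/Potter-bound argument of Theorem~\ref{thm:ma:rp:lm} goes through verbatim, producing the $G^{(1)}$ formula; when $\omega-\nu<0$ one needs an additional uniform bound, Lemma~\ref{lem:suplim}, controlling $\sup_{k\le\theta n}\log E[e^{(b_n/a_n)tS_k}]$ by $b_nJ_\theta(t)$ with $J_\theta(t)=\theta^{1+(1-\alpha)\beta}C_{\alpha,\beta}\Lambda^h(t)$, and the window $n\delta<T(a_n)<n/\delta$ is handled by a separate discretization yielding the $G^{(2)}$ formula. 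The case $\alpha=1$ is not a specialization of this Markov calculation at all; it is proved by the functional LDP route of Theorem~\ref{thm.sm.1}.

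In your route you optimize the tilt $r$ for each $n$, obtaining a per-term bound proportional to $b_n(t\mu+(u/a_n)\inf_{\gamma}t\gamma)^{\beta/(\beta-1)}/\Lambda^h(t)^{1/(\beta-1)}$. This series converges without any constraint on $t$, and optimizing over $y=u/a_n$ always yields an interior minimum (the relevant exponent difference $p-q=\beta/(\beta-1)-\nu/\omega=(\beta(1-\alpha)+1)/(\omega(\beta-1))$ is positive for every admissible $\omega$). So neither the constraint set $G^{(1)}$ nor the phase transition at $\omega=\beta(1-\alpha)+1$ would emerge from your calculation; the bound you would actually get is of the form $(\inf_\gamma t\gamma)^{\nu/\omega}(t\mu)^{p-q}/\Lambda^h(t)^{1/(\beta-1)}$, not the theorem's $(\inf_\gamma t\gamma)^{\nu/\omega}(t\mu-C_{\alpha,\beta}\Lambda^h(t))^{(\omega-\nu)/\omega}$. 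Your approach may well give a legitimate (possibly sharper) upper bound, but it does not produce the formulas stated in the theorem, and your explanation that the constraint ``binds before the unconstrained optimum is reached'' in the second regime is not what happens in either argument.
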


\begin{remark}\label{rem:explicit.rLL}
{\rm
Once again, the sets $G^{(1)}$ and $G^{(2)}$ in the theorem are not
empty. In the one-dimensional case
with $\mu>0$, $A=(1,\infty)$ and $\Lambda^h(t) = \xi_+ t^\beta$ for
$t>0$, the statement of the theorem gives the explicit limit
\begin{eqnarray*}
	&& \lim_{ u \rightarrow \infty  } \frac{ 1}{b_{ a^{ \leftarrow
      }(u)} }\log \rho(u) \\ 
	 &=& - \frac{ \big( \beta (\omega +\alpha -1)-1 \big)^{ \frac{
        1-\beta (\omega +\alpha -1)}{\omega (\beta -1) }}
  }{\big(1+\beta (1-\alpha )\big)^{ \frac{ 1+\beta (1-\alpha )}{\omega
        (\beta -1) }} } (\beta -1) \Big( \frac{ \omega^{ \beta
  }}{\xi_{ +}C_{\alpha , \beta } } \Big)^{ \frac{ 1}{\beta -1 }} \mu^{
    \frac{ 1+\beta (1-\alpha )}{\omega (\beta -1) }} . 
\end{eqnarray*}
}
\end{remark}

\begin{remark}\label{rem:rem:rp:longshort} 
{\rm 
As in the previous section, we clearly see how 
long range dependent variables $(X_n)$ (the ``claim sizes'') 
 influence the behavior of the ruin probability. Assume that the
 relevant upper bounds are finite and the relevant lower bounds are
 positive.  In the classical case of a linear sequence $ (a_{ n})$, in
 the short memory case (i.e. under the assumption $S1$), we have 
 \[ 
 	\log \rho(u)\approx  -c_{ S} u \ \ \ \  \mbox{ as } u\to
        \infty    
\]
for $ c_{ S} >0$, as in  Cram\'er's theorem. On the other hand,
in the long memory case the linear sequence falls into 
 the assumption $R3$, and then we have, instead,
\[ 
 	\log \rho (u) \approx -c_{ L} \frac{u}{ \Psi_{ u}^{2 }} 
        \ \ \ \ \mbox{ as } u\to \infty.    
\]
for $ c _{ L}>0$, and the right hand side above is in $RV_{
  2\alpha  -1}$, yielding a much larger ruin probability.

To further illustrate the effect of memory of a moving average process
on ruin probabilities we present Table 2, that presents the order of
magnitude of $-\log \rho (u)$ for large $u$ 
which we view in the form $-\log \rho (u) \approx -cu^\theta$ for
$c>0$. The tables presents dependence of $\theta$ on the exponent
$\omega$ of regular variation of the sequence $(a_n)$ in both short
and long memory cases. The value $\theta=0$ corresponds to the case
when $-\log \rho (u)$ grows slower than any positive power of
$u$. Notice that, for the same value of $\omega$, the value of
$\theta$ is always smaller in the long memory case than in the short
memory case, so that the ruin probability is much larger in the former
case than in the latter case. 

\begin{table}[h]\label{tab:tab2}
\caption{The effect of memory on the rate of decay of ruin probability when the claims process is a Moving Average.}
%\doublespacing
\begin{center}
\begin{tabular}{|c|c|c|}
\hline
Range of $\omega$ & Short range dependent & Long range dependent \\
\hline
$\frac12\leq \omega\leq \frac32 - \alpha$ &$ \theta = \frac{2\omega-1}{\omega } $& $\theta =0 $\\
$\frac32 - \alpha \leq \omega\leq 1$ & $\theta  = \frac{2\omega-1}{\omega }  $ & $\theta =\frac{2\omega +2\alpha -3}{\omega } $\\
$1< \omega < 2-\alpha $ & $\theta  = \frac{\beta \omega -1}{\omega(\beta-1)}$ & $\theta =\frac{2\omega +2\alpha -3}{\omega } $\\ 
$\omega \geq 2-\alpha$ &$ \theta  = \frac{\beta \omega -1}{\omega (\beta-1)}$ & $\theta =\frac{\beta(\omega +\alpha-1)-1}{\omega(\beta-1)} $\\
\hline
\end{tabular}
\end{center}
\end{table}
}
\end{remark}

\begin{proof}[Proof of Theorem \ref{thm:ma:rp:sm}.]
Notice  that for the moving   average process 
\[ 
 	   \log E\exp\Big(t\big(  S_{ n}-n\mu\big)\Big)=
           \sum_{i\in\mathbb{Z}}\Lambda\left(t\phi_{i,n}\right)-nt
           \mu. 
\]
The upper bound  follows immediately from part (i) of Theorem
\ref{t:nyrh3.1}. 

For the lower bound we apply part (ii) of Theorem \ref{t:nyrh3.1}. By
Lemma 3.5 (i) in  \cite{ghosh:samorodnitsky:2009}, $\Pi^\circ\subseteq
{\mathcal E}$, and for every $t\in \Pi^\circ$, $g(t) =
\Lambda(t)-t\mu$. The lower bound of part (i) of the present theorem 
follows. 
\end{proof}

\begin{proof}[Proof of Theorem \ref{thm.sm.1}.]
We start with the (easier) lower bound. We use the 
assumption of regular variation of $(a_n)$ as follows. First of all,
$b_n=a_n^2/n$ is regularly varying with exponent $2\omega-1$. Next,
for any $c>0$, 
$$
\frac{a^{ \leftarrow }(ca_{ n})}{nc^{ 1/\omega }}
= c^{-1/\omega } \frac{a^{ \leftarrow }(a_{ n})}{n}\frac{a^{ \leftarrow }(ca_{
    n})}{a^{ \leftarrow }(a_{ n})} 
\to 1
$$
as $n\to\infty$, see e.g. Theorem 1.5.12 in
\cite{bingham:goldie:teugels:1987}. Therefore, by the regular variation
of $(a_n)$ and $(b_n)$, 
\begin{eqnarray}\label{rp:lower:short}
	\liminf_{ u \rightarrow \infty   } \frac{ 1}{b_{ a^{ \leftarrow }(u)} }\log \rho( u)
	&= & \liminf_{n\rightarrow\infty} \frac{1}{b_{ a^{ \leftarrow }(ca_{ n})}}\log P\big[T( ca_{ n})<\infty \big] \nonumber \\
	&  \ge  & \liminf_{n\rightarrow\infty} \frac{ b_{ n}}{b_{ nc^{
              1/\omega }} } \frac{ 1}{b_{ n} }\log
        P\left[\frac{S_n}{a_{ n}}\in \mu+cA\right]  \\
	& \ge & -c^{ -(2\omega-1)/\omega}\inf_{\gamma\in
          A^\circ}\left( \frac12(\mu 
  +c\gamma)^\prime\Sigma^{-1}(\mu   +c\gamma)\right) \nonumber
\end{eqnarray}
by the large deviation principle; see \eqref{e:rate.f}. Now the lower
bound  follows by optimizing over $c>0$. 

Next we concentrate on the upper bound. We start with showing
that 
\begin{equation}\label{eq:al=1ptlast}
	\lim_{ M \rightarrow \infty  } \limsup_{ n \rightarrow \infty   } \frac{1}{b_{ n}} \log P \big[  nM<T( a_{ n})< \infty  \big] = - \infty. 
\end{equation}
To see this choose $t\in\bbr^d$ as in Condition $\mathcal A$ 
and $ \epsilon >0$ such that $
J(t)-t\mu +\epsilon <0$, where $J(t)=   \frac{1}{2}t\cdot\Sigma t$. 
For all $n$, 
\begin{eqnarray*}
 	P\big[  nM<T( a_{ n})< \infty  \big]  & = & \sum_{ k=nM+1}^{ \infty} P\big[ T(a_{ n})=k \big] \nonumber\\
	&\le &  \sum_{ k=nM+1}^{ \infty} P\big[ S_{ k}-a_{ k}\mu \in
          a_{ n}A\big] \\
        &\le &  \sum_{ k=nM+1}^{ \infty} P\big[ tS_{ k} -a_{ k}t\mu
          >a_n \inf_{\gamma\in A}t\gamma\big]. 
\end{eqnarray*}
Using Lemma 3.5(ii) in \cite{ghosh:samorodnitsky:2009} we know that
for all $n$ large enough, 
\[ 
 	\frac{ 1}{b_{ n} } \log E\Big[ \exp\Big(  t \frac{ b_{ n}}{a_{ n} } S_{ n} \Big)\Big]    \le J(t)+\epsilon .
\]
Therefore, applying an exponential Markov inequality we see that for
all $M$ large enough,  
\begin{eqnarray*}
	P\big[  nM<T( a_{ n})< \infty  \big] &\le &\sum_{ k=nM+1}^{
          \infty } \exp  \left\{ - \frac{ a_{ n}b_{ k}}{a_{ k}
        }\inf_{\gamma\in A}t\gamma  +  b_{ k} \Big( J(t)-\mu
        t+\epsilon \Big)  \right\}\\ 
	& \le & \sum_{ k=nM+1}^{  \infty } \exp  \left\{  b_{ k} \Big( J(t)-\mu t+\epsilon \Big)  \right\}\,.
\end{eqnarray*}
The assumption of regular variation of the sequence $(a_n)$ implies
that the sequence $(b_n)\in RV_{\nu}$ with 
$\nu = 2\omega-1$. Therefore, by 
Theorem 4.12.10 in \cite{bingham:goldie:teugels:1987} 
\[ 
 	\log    \sum_{ k=nM+1}^{  \infty } \exp  \left\{  b_{ k} \Big( J(t)-\mu t+\epsilon \Big)  \right\}\sim b_{ nM}\Big( J(t)-\mu t+\epsilon \Big)
\]
as $n\to\infty$, and so 
$$
	\limsup_{ n \rightarrow \infty   } \frac{ 1}{b_{ n} } \log P
        \big[  nM<T( a_{ n})< \infty  \big] \leq  M^{ \nu}\Big(
        J(t)-\mu t+\epsilon \Big)\,. 
$$
Now \eqref{eq:al=1ptlast} follows by letting $M\to\infty$. A similar
argument also shows that for any $ N\ge 1$, 
$$
	\lim_{ n \rightarrow \infty   } \frac{1}{b_{ n}} \log P \big[
          T( a_{ n})\le N \big] = - \infty, 
$$
and so  in order to prove the upper bound  of the theorem,
it suffices to show that  
\begin{eqnarray}\label{eq:al=1fin}
&&	\limsup_{ M \rightarrow  \infty } \limsup_{ N \rightarrow
          \infty   } \limsup_{ n \rightarrow \infty   }  \frac{1}{b_{
            n}}\log P \big[  N< T( a_{ n})\le nM \big] \\
&&\leq -\inf_{c>0} \left[ c^{-(2w-1)/w}\inf_{\gamma\in A}\left(
    \frac12\, (\mu 
  +c\gamma)^\prime\Sigma^{-1}(\mu   +c\gamma)\right)\right]. \nonumber
\end{eqnarray}
Notice that 
\begin{eqnarray*}
	 &  & P\big[  N<T( a_{ n})\le nM \big]\\
	 & = & P\big[ S_{ k}-a_{ k}\mu\in a_{ n}A \  \mbox{ for some }
    N<k\le nM\big] \\ 
	 & = & P\Bigg[ S_{ [ nMt]}\in a_{ nMt}\mu+a_{ n}A \  \mbox{
      for some } \frac{ N}{nM }<t\le 1 \Bigg] \\  
	 & = & P\Bigg[ Y_{ nM}( t)\in \frac{ a_{ nMt}}{a_{ nM} }\mu+
    \frac{ a_{ n}}{a_{ nM} }A \  \mbox{ for some } \frac{ N}{ nM}<t\le
    1\Bigg]. 
\end{eqnarray*}
Let $0<\delta<1$. By the Potter bounds, for all $ N\ge 1$ large enough
we have 
\[ 
 	\frac{ a_{ x}}{a_{ y} }>(1-\delta )\Big( \frac{ x}{y } \Big)^{
          \omega +\delta } \ \ \ \ \mbox{ for all }N<x\le y.     
\]
For such $ N$ and any $ n>N$ we have by the second part of Condition
$\mathcal A$,  
\begin{eqnarray*}
	 &  & P\big[  N<T( a_{ n})\le nM \big]\\
	 & \le & P \Bigg[  Y_{ nM}(t)\in (1-\delta )\Big( t^{\omega
      +\delta  }\mu + M^{ -(\omega +\delta)}A \Big) \  \mbox{ for some } \frac{ N}{nM }<t\le 1\Bigg]\\
	 & \le & P \Bigg[  Y_{ nM}(t)\in (1-\delta )\Big( t^{\omega
      +\delta  }\mu + M^{ -(\omega +\delta)}A \Big) \  \mbox{ for some
    } 0\leq t\le 1\Bigg]\\
	 & = & P\big[ Y_{ nM}\in B \big]\,, 
\end{eqnarray*}
where 
\[ 
 	B= \Bigg\{ f\in \mathcal{BV}:f( t)\in (1-\delta )\Big(
        t^{\omega +\delta  }\mu + M^{ -(\omega +\delta) }A \Big)
        \ \mbox{ for some } 0\le t\le 1\Bigg\} ,   
\]
and $ \mathcal{BV}$ is the space of measurable functions of bounded variation.
Applying the functional large deviation principle  in Theorem 2.2 in
\cite{ghosh:samorodnitsky:2009} we obtain 
$$
 	\limsup_{ n \rightarrow \infty   } \frac{ 1}{b_{nM} } P\big[
          N<T( a_{ n})\le nM \big] \le -  \inf_{ f\in \bar{ B}}I( f)    , 
$$
where the closure of $B$ is taken in the uniform topology, and 
\[ 
 	I( f)= \left\{  \begin{array}{ll}  \int\limits_{ 0}^{ 1}I_{
            l}\big( f^{ \prime }(t) \big) dt  &\mbox{ if }f\in
          \mathcal{AC}, f( 0)=0  \\ \infty & \mbox{
            otherwise.} \end{array}\right.     
\]
Clearly, 
$$
\bar{ B} = \Bigg\{ f\in \mathcal{BV}:f( t)\in (1-\delta )\Big(
        t^{\omega +\delta  }\mu + M^{ -(\omega +\delta) }\bar{A} \Big)
        \ \mbox{ for some } 0\le t\le 1\Bigg\}, 
$$
and so
\begin{equation}\label{e:FLDP.A}
	\limsup_{ n \rightarrow \infty   } \frac{ 1}{b_{nM} } P\big[
          N<T( a_{ n})\le nM \big] \le - \inf_{y\in
          \bar{A}}\inf_{0\leq t_0\leq 1} \inf_{f\in G_{y,t_0}}
\int\limits_{ 0}^{ 1}I_{
            l}\big( f^{ \prime }(t) \big) dt,
\end{equation}
where
$$
G_{y,t_0} = \Bigg\{ f\in \mathcal{AC}: 
f( t_0)= (1-\delta )\Big(
        t_0^{\omega +\delta  }\mu + M^{ -(\omega +\delta)
        }y\Bigr)\Bigg\}.
$$

Next, we notice that for every $f\in G_{y,t_0}$ we have by the
definition of the rate function $I_l$ in \eqref{e:LDP} and convexity, 
$$
\int\limits_{ 0}^{ 1}I_{
            l}\big( f^{ \prime }(t) \big) dt
= \int\limits_{ 0}^{ 1} \frac12 f^{ \prime }(t)^{ \prime } \Sigma^{-1} f^{ \prime
}(t)\, dt
$$
$$
\geq \int\limits_{ 0}^{t_0} \frac12 f^{ \prime }(t)^{ \prime } \Sigma^{-1} f^{ \prime
}(t)\, dt
$$
$$
\geq \frac{1}{2t_0} \left( \int\limits_{ 0}^{t_0} f^{ \prime}(t)\,
dt\right)^{ \prime } \Sigma^{-1} \left( \int\limits_{ 0}^{t_0} f^{ \prime}(t)\,
dt\right) 
$$
$$
= \frac{1}{2t_0} f(t_0)  \Sigma^{-1} f(t_0)
$$
$$ 
= \frac{1}{2t_0} (1-\delta)^2 \left[ \Bigl( t_0^{\omega +\delta
  }\mu + M^{ -(\omega +\delta)}y\Bigr)^{ \prime } \Sigma^{-1} 
\Bigl( t_0^{\omega +\delta
  }\mu + M^{ -(\omega +\delta)}y\Bigr)\right].
$$
Introducing the variable $c=\bigl( t_0M)^{-(\omega+\delta)}$, we obtain
$$
	\limsup_{ n \rightarrow \infty   } \frac{ 1}{b_{nM} } P\big[
          N<T( a_{ n})\le nM \big] 
$$
$$
\le 
-\inf_{c\geq M^{-(\omega+\delta)}}\inf_{y\in \bar{A}}
M^{1-2(\omega+\delta)} c^{1/(\omega+\delta)-2}\, (1-\delta)^2\, 
\frac12\, (\mu+cy)^{ \prime }\Sigma^{-1}(\mu+cy),
$$
and so for every $0<\delta<1$, 
$$
	\limsup_{ n \rightarrow \infty   } \frac{ 1}{b_{n} } P\big[
          N<T( a_{ n})\le nM \big]
$$
$$
\leq -M^{-2\delta}\, (1-\delta)^2 \inf_{c\geq M^{-(\omega+\delta)}}c^{1/(\omega+\delta)-2}
\inf_{y\in \bar{A}} \frac12\, (\mu+cy)^{ \prime }\Sigma^{-1}(\mu+cy).
$$
Letting $\delta\to 0$, and noticing that the closure of $A$ plays no
role in the right hand side above, we obtain \eqref{eq:al=1fin}  and,
hence, conclude the proof. 
\end{proof}

\begin{proof}[Proof of Theorem \ref{thm.sm.1a}.]
The proof of this theorem is very similar to that of Theorem
\ref{thm.sm.1}. Note that now $(b_n)$ is a regularly varying sequence
with 
exponent $\nu$. We establish the lower bound of this part of the
theorem in the same was as in Theorem \ref{thm.sm.1},
except that we are using a different rate in the large
deviation principle, as given in \eqref{e:rate.f}.

For the upper bound, we also proceed as in the proof of the upper
bound in Theorem \ref{thm.sm.1}, but now we use Lemma 3.5(iii) and the
appropriate part 
of Theorem 2.2 in \cite{ghosh:samorodnitsky:2009}. This gives us
\eqref{e:FLDP.A}, but this time the rate function $I_l$ scales
according to
$$
I_l(ax) = a^{\beta/(\beta-1)}I_l(x), \ a>0, x\in \bbr^d\,.
$$
Therefore, for every $f\in G_{y,t_0}$
$$
\int\limits_{ 0}^{ 1}I_{
            l}\big( f^{ \prime }(t) \big) dt
= \int\limits_{ 0}^{ 1} \bigl( \Lambda^h\bigr)^*\bigl(
f^\prime(t)\bigr) \, dt
$$
$$
\geq \frac{1}{t_0^{1/(\beta-1)}} \bigl( \Lambda^h\bigr)^*\bigl(
f(t_0)\bigr)
$$
$$
= \frac{1}{t_0^{1/(\beta-1)}} (1-\delta)^{\beta/(\beta-1)}
\bigl( \Lambda^h\bigr)^*\Bigl(t_0^{\omega +\delta
  }\mu + M^{ -(\omega +\delta)}y\Bigr)\,.
$$
Therefore,
$$
	\limsup_{ n \rightarrow \infty   } \frac{ 1}{b_{nM} } P\big[
          N<T( a_{ n})\le nM \big] 
$$
$$
\le 
-\inf_{c\geq M^{-(\omega+\delta)}}\inf_{y\in \bar{A}}
M^{(1-\beta(\omega+\delta))/(\beta-1)}
c^{(1/(\omega+\delta)-\beta)/(\beta-1)} (1-\delta)^{\beta/(\beta-1)}
\bigl( \Lambda^h\bigr)^*(\mu+cy),
$$
and so for every $0<\delta<1$, 
$$
	\limsup_{ n \rightarrow \infty   } \frac{ 1}{b_{n} } P\big[
          N<T( a_{ n})\le nM \big]
$$
$$
\leq -M^{-\beta\delta/(\beta-1)}\, (1-\delta)^{\beta/(\beta-1)}
\inf_{c\geq M^{-(\omega+\delta)}}c^{(1/(\omega+\delta)-\beta)/(\beta-1)} 
\inf_{y\in \bar{A}} \bigl( \Lambda^h\bigr)^*(\mu+cy). 
$$
Now we let $\delta\to 0$ and complete the proof. 
\end{proof}

\begin{proof}[Proof of Theorem \ref{thm:ma:rp:lm}.]  The lower bound
  is obtained as in \eqref{rp:lower:short}, with $b_n=n$ and
  $\omega=2-\alpha$, using the appropriate part of the large deviation
  principle in \eqref{e:LDP} and \eqref{e:rate.f}.

The proof of the upper bound for $\alpha=1$ proceeds, once again,
similarly to that 
of Theorem \ref{thm.sm.1}. Let $J(t) = \Lambda(t)$. By the
assumption of zero mean we know that, for some $c>0$, $J(t) \leq c\|
t\|^2$ for all $t$ in a neighborhood of the origin. Therefore, we can
still select $t\in\bbr^d$ as in Condition $\mathcal A$ 
and $ \epsilon >0$ such that $J(t)-t\mu +\epsilon <0$, and we conclude
that \eqref{eq:al=1ptlast} still holds. Furthermore, using part (ii)
of Theorem 2.4 in \cite{ghosh:samorodnitsky:2009}, we conclude that 
\eqref{e:FLDP.A} holds  as well. Note that for every $f\in G_{y,t_0}$
by the convexity of the function $ \Lambda^*$, 
\begin{equation} \label{e:bound.I}
\int\limits_{ 0}^{ 1}I_{
            l}\big( f^{ \prime }(t) \big) dt
= \int\limits_{ 0}^{ 1} \Lambda^*\bigl(
f^\prime(t)\bigr) \, dt
\end{equation}
$$
\geq t_0  \Lambda^*\bigl( t_0^{-1} f(t_0)\bigr)
=  t_0\Lambda^*\Bigl( t_0^{-1} (1-\delta )\big(
        t_0^{1 +\delta  }\mu + M^{ -(1
          +\delta)}y\bigl)\Bigr).
$$
The same argument as in the proof of the upper bound in Theorem
\ref{thm.sm.1} shows that for any fixed $0<\theta<1$, 
$$
 \inf_{y\in
          \bar{A}}\inf_{\theta\leq t_0\leq 1} \inf_{f\in G_{y,t_0}}
\int\limits_{ 0}^{ 1}I_{
            l}\big( f^{ \prime }(t) \big) dt \geq M^{-1} 
\inf_{ c>0}c^{-1}\inf_{\gamma\in \bar A}
\Lambda^{ *}\big(\mu+c\gamma\big)\,.
$$
On the other hand, under the assumptions of the theorem,
$\Lambda^\ast$ grows super-linearly fast as the norm of its argument
increases. Therefore, it follows from \eqref{e:bound.I} that 
$$
\lim_{\theta\to 0}  \inf_{y\in
          \bar{A}}\inf_{0<t_0<\theta} \inf_{f\in G_{y,t_0}}
\int\limits_{ 0}^{ 1}I_{
            l}\big( f^{ \prime }(t) \big) dt=\infty. 
$$
This proves the upper bound in the case $\alpha=1$. 

Next we consider the case $\alpha<1$. Fix $t\in G$, and choose $
0<\epsilon <t\mu - \Lambda _{ \alpha }(t)$. We start with recalling
that, by Lemma 3.6(i) in \cite{ghosh:samorodnitsky:2009}, 
\[ 
 	\frac{ 1}{k }\log E \big[  e^{tS_{ k}/\Psi_{ k} } \big]\le
        \Lambda _{ \alpha }(t)+\epsilon 
\]
for all $k$ large enough, say, $k\geq N$. In particular, 
$\sup_{ k\ge 1} E \big[  e^{t S_{ k}/\Psi_{ k}-kt\mu} \big]
<\infty$. Let $\delta>0$. Notice that
\begin{eqnarray}\label{eq:rp:lowerpart}
	\limsup_{ n \rightarrow \infty   } \frac{ 1}{n}
\log P \big[  T( a_{ n})\le n\delta  \big] 
& \le  & \limsup_{ n \rightarrow \infty   }  \frac{ 1}{n} 
\log \sum_{ k=1}^{ [ n\delta ]} P \big[  tS_{ k}-a_{ k}t\mu >a_{
    n}\inf_{\gamma\in A} t\gamma \big]
\nonumber \\
	& \le & \limsup_{ n \rightarrow \infty   }  \frac{ 1}{n} 
\log \sum_{ k=1}^{ [ n\delta ]} e^{ - n\Psi_{ n}/\Psi_{ k}\inf_{\gamma\in A} t\gamma} 
E \big[  e^{ tS_{ k}/\Psi_{ k}-kt\mu} \big]\nonumber\\
	& \le & -\inf_{\gamma\in A} t\gamma\limsup_{ n \rightarrow
  \infty   } \frac{ \Psi_{ n}}{\Psi_{ [ n\delta] } }= -\delta^{\alpha -1 }
\inf_{\gamma\in A} t\gamma \,.
\end{eqnarray}
Next, for $ n\ge N/\delta $, the same argument gives us 
\begin{eqnarray*}
	 P \big[ n\delta <  T( a_{ n}) < \infty  \big]
	 & \le &  \sum_{ k=[ n\delta ]+1}^{ \infty }
P \Big[  \frac{tS_{ k}}{\Psi_{ k}} >kt\mu+ \frac{ n\Psi_{ n}}{\Psi_{
      k} }\inf_{\gamma\in A} t\gamma  \Big]\\	 
	 & \le &  \sum_{ k=[ n\delta ]+1}^{ \infty }\exp \Big\{ 
-\frac{ n\Psi_{ n}}{\Psi_{ k} } \inf_{\gamma\in A} t\gamma 
+ k\big( \Lambda _{ \alpha }(t)-t\mu +\epsilon \big)   \Big\}\,.
\end{eqnarray*}
We break up the sum into pieces. By the monotonicity of the sequence $
\bigl(\Psi_{ n}\bigr)$ and the choice of $\epsilon$ we have for $i\geq
1$, 
\begin{eqnarray*}
	 &  & \sum_{ k=i[ n\delta] +1}^{ ( i+1)[ n\delta]  }\exp \Big\{ 
-\frac{ n\Psi_{ n}}{\Psi_{ k} } \inf_{\gamma\in A} t\gamma 
+ k\big( \Lambda _{ \alpha }(t)-t\mu +\epsilon \big)   \Big\} \\
	 & \le & n\delta \exp \Big\{
- \frac{ n\Psi_{ n}}{ \Psi_{ ( i+1)[ n\delta] } }\inf_{\gamma\in A} t\gamma 
+ ( i[ n\delta] +1)\big( \Lambda _{ \alpha }(t)-t\mu +\epsilon \big)
\Big\} \,. 
\end{eqnarray*}

Let $0<\eta<1-\alpha$. By the Potter bounds (see Proposition 0.8 in
\cite{resnick:1987}) there exists $ N_{ 1}\ge 1$  such that for every
$ n\ge N_{ 1}$ we have both 
\[ 
 	\frac{ \Psi_{ n}}{\Psi_{ ( i+1)[ n\delta ]} }\ge x_{ i,\delta
        }( \eta ):=( 1-\eta)\min  \left\{ \big( ( i+1)\delta  \big)^{
          \alpha-1 -\eta} , \big( ( i+1)\delta  \big)^{ \alpha
          -1+\eta} \right\}    
\]
and $(i[ n\delta ]+1)/n\geq i\delta ( 1-\eta)$. We conclude that for $
n>\max  \left\{ N/\delta ,N_{ 1} \right\}$ and $i\geq 1$, 
\begin{eqnarray*}
	 &  & \sum_{ k=i[ n\delta] +1}^{ ( i+1)[ n\delta]  }\exp
  \Big\{ -\frac{ n\Psi_{ n}}{\Psi_{ k} } \inf_{\gamma\in A} t\gamma +
  k\big( \Lambda _{ \alpha }(t)-\mu t+\epsilon \big)   \Big\} \\ 
	 & \le & n\delta \exp \Big\{- n \Big(x_{ i,\delta }( \eta )
  \inf_{\gamma\in A} t\gamma 
-  i\delta ( 1-\eta)\big( \Lambda _{ \alpha }(t)-\mu t+\epsilon \big)
  \Big)  \Big\}\,.  
\end{eqnarray*}
Denoting $ y_{ i}= x_{ i,\delta }( \eta )  \inf_{\gamma\in A} t\gamma
- i\delta ( 1-\eta)\big( \Lambda _{ \alpha }(t)-\mu t+\epsilon \big) $
and $ y^{ *}=\min_{ i\ge 1}y_{ i}$, we see that $ y^{ *}>0$ and that
$ y^{ *}=y_{ i^{ *}}$ for some $ i^{ *}\ge 1$. Therefore, for every $
n>\max  \left\{ N/\delta ,N_{ 1} \right\}$ we have
$$
P \big[ n\delta <  T( a_{ n}) < \infty  \big] \leq 
n\delta \exp \big\{ -n y^{ *} \big\}\sum_{ i=1}^{ \infty }\exp \big\{ -n
( y_{ i}-y^{ *}) \big\}
$$
and, therefore,
\begin{equation}\label{eq:bigpart}
	  \limsup_{ n \rightarrow \infty   }  \frac{ 1}{ n}\log P
          \big[ n\delta <  T( a_{ n}) < \infty  \big]   \le  -y^{
            *} \,.  
\end{equation}
Combining \eqref{eq:rp:lowerpart} and \eqref{eq:bigpart} we obtain 
\[ 
 	   \limsup_{ u \rightarrow \infty   } \frac{ 1}{a^{ \leftarrow
             }(u) }\log \rho( u)= \limsup_{ n \rightarrow \infty   }
           \frac{ 1}{n }\log P \big[  T( a_{ n})< \infty  \big] \le
           \max\big\{ -t^{ *}\delta ^{ \alpha -1},-y^{ *} \big\} \,. 
\]
Letting $ \epsilon $ and $ \eta$ decrease to $ 0$, we
conclude that
$$
 	   \limsup_{ u \rightarrow \infty   } \frac{ 1}{a^{ \leftarrow
             }(u) }\log \rho( u)
\leq -\min_{ i\ge 1}\Bigl( \bigl((i+1)\delta
\bigr)^{\alpha-1}\inf_{\gamma\in A} t\gamma -i\delta\bigl( \Lambda _{
  \alpha }(t)-\mu t\bigr)\Bigr)
$$
$$
\leq -\inf\limits_{ u> 0}  \left\{u^{ \alpha -1}\inf_{\gamma\in A} t\gamma
 -u\Big( \Lambda _{\alpha }(t)-t\mu \Big)  \right\}
+ \delta\Big( \Lambda _{\alpha }(t)-t\mu \Big)\,.
$$
Letting, finally, $\delta\to 0$ and optimizing over $t\in G$ completes
the proof. 
\end{proof}

\begin{proof}[Proof of Theorem \ref{thm:ma:rp:lm.1}.] 
The lower bound in the theorem is established in the same way as the
lower bound in Theorem \ref{thm.sm.1}, using the fact that in the
present theorem, the sequence $(b_n)$ is regularly varying with
exponent $\nu=2(\omega+\alpha)-3$, the large deviation principle
\eqref{e:rate.f}, and the fact that $(G_\Sigma)_\alpha =
C_{\alpha,2}G_\Sigma$. 

For the upper bound, we consider, once again, the cases $ \alpha <1$
and $ \alpha =1$ separately. In the case $\alpha<1$ we notice that the
sequence $(a_n/b_n)$ is regularly varying with the exponent 
$$
\omega-\nu = 3-2\alpha-\omega\geq 1-\alpha>0\,.
$$
Therefore, the argument used in the proof of the upper bound in the
case $\alpha<1$ in Theorem \ref{thm:ma:rp:lm} applies in this case as
well, resulting in 
$$
 	\limsup_{ u \rightarrow \infty } \frac{ 1}
        { b_{ a^{\leftarrow}(u)}} \log \rho(u) \le  -\sup_{t\in G}
        \inf_{u>0} \left\{u^{ -(\omega-\nu)}\inf_{\gamma\in A} t\gamma
        - u^\nu \Bigl( \frac12 C_{\alpha,2}\, t^\prime \Sigma t -
        t\mu\Bigr)\right\} . 
$$
The infimum over $u$ is achieved at 
$$
u = \left( \frac{\nu}{\omega-\nu}\frac{t\mu-\frac12 C_{\alpha,2} \,
  t^\prime \Sigma t}{\inf_{\gamma\in A} t\gamma}\right)^{
  -1/\omega }\,,
$$
and the upper bound in the case $\alpha<1$ is obtained by
substitution. 

The argument in the case $ \alpha =1$ is the same as the argument of
the corresponding case in Theorem \ref{thm:ma:rp:lm}. 
\end{proof}

\begin{proof}[Proof of Theorem \ref{thm:ma:rp:lm.1a}.] 
The lower bound in the theorem is, once again, established in the same
way as the 
lower bound in Theorem \ref{thm.sm.1}, using the fact that in the
present theorem, the sequence $(b_n)$ is regularly varying with
exponent $\nu= \bigl(\beta(w+\alpha-1)-1\bigr)/(\beta-1)$, the large
deviation principle \eqref{e:rate.f}, and the fact that $(\Lambda ^h)_\alpha =
C_{\alpha,\beta}\Lambda ^h$. 

We prove now the upper bound. Suppose first that $\alpha<1$ and
$\omega<\beta(1-\alpha)$. In this case $\omega-\nu>0$ and we use, once
again, the argument of the proof of the upper bound in the
case $\alpha<1$ in Theorem \ref{thm:ma:rp:lm}. This gives us this time 
$$
 	\limsup_{ u \rightarrow \infty } \frac{ 1}
        { b_{ a^{\leftarrow}(u)}} \log \rho(u) \le  -\sup_{t\in G^{(1)}}
        \inf_{u>0} \left\{u^{ -(\omega-\nu)}\inf_{\gamma\in A} t\gamma
        - u^\nu \Bigl(  C_{\alpha,\beta}\, \Lambda^h(t)- 
        t\mu\Bigr)\right\} . 
$$
The infimum over $u$ is achieved at 
$$
u = \left( \frac{\nu}{\omega-\nu}\frac{t\mu- C_{\alpha,\beta} \,
  \Lambda^h(t)}{\inf_{\gamma\in A} t\gamma}\right)^{
  -1/\omega }\,,
$$
and the required upper bound follows by substitution. 

Next, we suppose that $\alpha<1$ and $\omega>\beta(1-\alpha)$. The
proof is similar to that of the proof of the upper bound in the
case $\alpha<1$ in Theorem \ref{thm:ma:rp:lm}, but relies on Lemma
\ref{lem:suplim} below in addition to Lemma 3.6 in
\cite{ghosh:samorodnitsky:2009}. 

For $t\in\bbr^d$ and $u>0$ let 
$J_{ u}(t)=u^{ 1+(1-\alpha )\beta }C_{ \alpha,\beta  }\Lambda^h(t)$. 
Let $0<\delta<1$, and note that by Lemma \ref{lem:suplim}, for any
$t\in\bbr^d$ as in Condition \ref{cond:A}, 
\begin{eqnarray*}
	 &  & \limsup_{ n \rightarrow \infty  } \frac{ 1}{b_{ n} }
  \log P\big[ T(a_{ n})\le n\delta  \big]\\ 
	 & \le &   \limsup_{ n \rightarrow \infty  } \frac{ 1}{b_{ n}
  } \log \sum_{ k=1}^{ [ n\delta ]} P \big[  tS_{ k} >a_{ n}
    \inf_{\gamma\in A}t\gamma\big]\\  
	 & \le & \limsup_{ n \rightarrow \infty  } \frac{ 1}{b_{ n} }
  \log \sum_{ k=1}^{ [ n\delta ]} e^{ -b_{ n}\inf_{\gamma\in A}t\gamma} E\Big[ \exp \{   
    \frac{ b_{ n}}{a_{ n} }tS_{ k}  \} \Big] \\ 
	 & \le & -\inf_{\gamma\in A}t\gamma 
 + \limsup_{ n \rightarrow \infty  } \frac{ 1}{b_{ n} } \log \Bigl\{ n
 \delta \sup_{ k\le n\delta }
E\Big[ \exp \{    \frac{ b_{ n}}{a_{ n} }tS_{ k}  \} \Big]\Bigr\}
 = -\inf_{\gamma\in A}t\gamma + J_{ \delta }(t). 
\end{eqnarray*}
 Since $ J_{ \delta }(t)\to 0$ as $ \delta \to 0$ for every $t$, we
 see that 
$$
\lim_{\delta\to 0} \limsup_{ n \rightarrow \infty  } \frac{ 1}{b_{ n} }
  \log P\big[ T(a_{ n})\le n\delta  \big]
\leq -\inf_{\gamma\in A}t\gamma\,.
$$
Since we may replace $t$ by $ct$ for any $c>0$ without violating the
restrictions imposed by Condition \ref{cond:A}, we let $c\to\infty$ to
conclude that
\begin{equation} \label{e:last.small}
\lim_{\delta\to 0} \limsup_{ n \rightarrow \infty  } \frac{ 1}{b_{ n} }
  \log P\big[ T(a_{ n})\le n\delta  \big] = -\infty\,.
\end{equation}

Further, using Lemma 3.6 in \cite{ghosh:samorodnitsky:2009} the
argument used to prove \eqref{eq:al=1ptlast} applies, and gives us 
\begin{equation} \label{e:last.big}
\lim_{\delta\to 0} \limsup_{ n \rightarrow \infty  } \frac{ 1}{b_{ n} }
  \log P\big[ n\delta^{-1}\leq T(a_{ n})<\infty  \big] = -\infty\,.
\end{equation}

Next, fix $t\in G^{(2)}$. This means that we can choose $0< \epsilon < 1$ so
small that $J_{  u}(t)- u^\omega t\mu - \inf_{\gamma\in A}t\gamma +\epsilon<0$
for all $u>0$. For $0<\delta<1$ we have, as before,  
\begin{eqnarray*}
	&& \limsup_{ n \rightarrow \infty   } \frac{ 1}{b_{ n}}\log P
  \big[ n\delta  <  T( a_{ n}) <  n\delta^{-1}  \big]\\ 
	 & \le &  \limsup_{ n \rightarrow \infty   } \frac{ 1}{b_{ n}
  }\log \sum_{ k=[ n\delta ]+1}^{ [n\delta^{-1}] }P \big[  tS_{ k}-a_{ k}t\mu >a_{
    n}\inf_{\gamma\in A} t\gamma \big] \\ 
	 & \le & \limsup_{ n \rightarrow \infty   } \frac{ 1}{b_{ n} }
\log \sum_{ k=[ n\delta ]+1}^{ [n\delta^{-1}] }
\exp \Big\{ -b_{ n}\Big( \inf_{\gamma\in A}t\gamma 
+ \frac{ a_{ k}}{a_{ n} } t\mu \Big) \Big\}  \,
E \Big[  \exp\bigl\{ \frac{ b_{ n}}{a_{ n} }tS_{ k} \bigr\}\Big]  .
\end{eqnarray*}

Let $0<\eta<1$. By the Potter bounds there exists $ N_{ 1}\ge 1$
such that for  $ k,l\ge N_{ 1}$ 
\[ 
  	\frac{ a_{ k}}{ a_{ l}}\ge a_{k,l}(\eta) 
:=  (1-\eta )\min \bigg\{\Big( \frac{ k}{l } \Big)^{ \omega-\eta},
        \Big(\frac{ k}{l }\Big)^{ \omega +\eta }\bigg\}
\]
and for every $n\geq N_1$, $[n\delta]/n\geq (1-\eta)\delta$. For every
$ n>N_1/\delta $ and $i\geq 1$, 
\begin{eqnarray*}
	 &  & \sum_{ k=i[ n\delta] +1}^{ ( i+1) [n\delta]  } 
\exp \Big\{ -b_{ n}\Big( \inf_{\gamma\in A}t\gamma 
+ \frac{ a_{ k}}{a_{ n} } t\mu \Big) \Big\} \, 
E \Big[  \exp\{ \frac{ b_{ n}}{a_{ n} }tS_{ k} \bigr\}\Big] \\
	 & \le & \sum_{ k=i[ n\delta] +1}^{ ( i+1) [n\delta]  } 
\exp \Big\{ -b_{ n}\Big( \inf_{\gamma\in A}t\gamma 
+  a_{k,n}(\eta) t\mu \Big) \Big\}  \,
E \Big[ \exp\bigl\{  \frac{ b_{ n}}{a_{ n} }tS_{ k} \bigr\} \Big] \\
	 & \le & n\delta 
\exp \Big\{ -b_{ n}\Big( \inf_{\gamma\in A}t\gamma 
+  a_{i[n\delta ],n}(\eta) t\mu \Big) +\sup_{ k\le (i+1) [n\delta ]}
\log E \Big[ \exp\bigl\{  \frac{ b_{ n}}{a_{ n} }tS_{ k} \bigr\} \Big]
\Big\}\,.  
\end{eqnarray*}

By the choice of $n$, we known that for every $ i\ge 1$, $ a\big( [ in
  \delta ],n \big)(\eta)\ge (1-\eta )^{ \omega +\eta+1 }a(i\delta
,1)(\eta)$. Furthermore, by Lemma \ref{lem:suplim}, we can choose
$N_2$ so large that for all $n\geq N_2$, all $i=1,2,\ldots,
\delta^{-2}+1$, 
$$
\sup_{ k\le (i+1) [n\delta ]}
\log E \Big[ \exp\bigl\{  \frac{ b_{ n}}{a_{ n} }tS_{ k} \bigr\} \Big]
\leq b_n\bigl( J_{ (i+1) \delta }(t)+\epsilon \big).
$$
Therefore, for all $n\geq \max(N_1/\delta, N_2)$ and $i$ as above,
\begin{eqnarray*}
	 &  & \sum_{ k=i[ n\delta] +1}^{ ( i+1) [n\delta]  } 
\exp \Big\{ -b_{ n}\Big( \inf_{\gamma\in A}t\gamma 
+ \frac{ a_{ k}}{a_{ n} } t\mu \Big) \Big\} \, 
E \Big[  \exp\{ \frac{ b_{ n}}{a_{ n} }tS_{ k} \bigr\}\Big]  \\
	 & \le & n \delta \exp \Big\{ -b_{ n}\Big( \inf_{\gamma\in A}t\gamma 
+ (1-\eta )^{ \omega +\eta+1 }a(i\delta ,1)(\eta)t\mu\bigr) +b_n\bigl( J_{
  (i+1) \delta }(t)+\epsilon \big)  \Big\} . 
\end{eqnarray*}

We proceed as in the proof of the upper bound in the
case $\alpha<1$ in Theorem \ref{thm:ma:rp:lm}. Setting 
$$
y_{ i}= +\inf_{\gamma\in A}t\gamma + (1-\eta )^{ \omega +\eta+1
}a(i\delta ,1)(\eta)t\mu  -J_{  (i+1) \delta }(t)-\epsilon
$$
and $ y^{ *}=\min_{i}y_{ i\geq 1}$,
we proceed as in the above prove and conclude that 
\begin{equation}\label{eq:bigpart:huge}
	  \limsup_{ n \rightarrow \infty   }  \frac{ 1}{b_{ n} }\log
          P\big[  n\delta <T(a_{ n})< \infty  \big]  \le  -y^{ *}  . 
\end{equation}
Combining \eqref{eq:bigpart:huge}, \eqref{e:last.small} and
\eqref{e:last.big}, and letting first $\delta\to 0$, and then $\eta\to
0$ and $\epsilon\to 0$, we obtain 
\begin{eqnarray*}
	 \limsup_{ u \rightarrow \infty   } \frac{ 1}{b_{ a^{
               \leftarrow }(u)} }  \log \rho( u) 
	 & \le & \sup_{ u> 0}  \left\{- \inf_{\gamma\in A}t\gamma
         -u^{ \omega } t\mu + u^{1+\beta(1-\alpha)}
         C_{\alpha,\beta}\Lambda^h(t)  \right\} \,.
\end{eqnarray*}
The supremum is attained at 
\[ 
 	u=\Big( \frac{ 1+(1-\alpha )\beta }{\omega t\mu} C_{ \alpha
          ,\beta }\Lambda^h(t)\Big)^{ \frac{ 1}{\omega
            -(1+(1-\alpha )\beta ) }},      
\]
and the required upper bound is obtained by substitution and optimizing
over $t$. 

Finally, in the case $ \alpha =1$ the upper bound of the present
theorem can be obtained in the same way as in 
Theorem \ref{thm.sm.1}. 
\end{proof}

This section is concluded by a lemma needed for the proof of Theorem
\ref{thm:ma:rp:lm.1a}. 

\begin{lemma}\label{lem:suplim} 
Under the assumption $ R4$ with $\alpha<1$, for any $ \theta>0$ and $
t\in\bbr^d$ 
 \[ 
 	\lim_{ n \rightarrow \infty   } \frac{ 1}{b_{ n} }\sup_{ k\le
          \theta n }\log E \Big[\exp\Big\{ \frac{ b_{ n}}{a_{ n} }tS_{ k}
          \Big\} \Big]    \le u^{ 1+(1-\alpha )\beta }C_{ \alpha,\beta
        } \Lambda^h(t), 
\]
where $C_{\alpha,\beta}$ is given by \eqref{e:C}. 
\end{lemma}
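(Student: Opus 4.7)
The plan is to evaluate the moment generating function exactly using the moving-average representation, and then reduce the supremum over $k\le\theta n$ to the marginal scaling analysis for $k=n$ already contained in Lemma~3.6 of \cite{ghosh:samorodnitsky:2009}, exploiting monotonicity in $k/n$ at the end. (Note that the bound on the right-hand side should read $\theta^{1+(1-\alpha)\beta}C_{\alpha,\beta}\Lambda^h(t)$; the ``$u$'' appears to be a typographical slip.)

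First, a standard reindexing gives $S_k=\sum_{m\in\mathbb{Z}}\phi_{m,k}Z_m$ with $\phi_{m,k}=\phi_{m+1}+\cdots+\phi_{m+k}$, so by independence of the innovations,
\[
\log E\!\left[\exp\!\Bigl(\tfrac{b_n}{a_n}\,t S_k\Bigr)\right]=\sum_{m\in\mathbb{Z}}\Lambda\!\left(\tfrac{b_n}{a_n}\,t\,\phi_{m,k}\right),
\]
with each summand finite under $R4$ for $n$ large. Balanced regular variation of $(\phi_i)$ gives $\phi_{\lfloor kx\rfloor,k}/\Psi_k\to \tilde h(x)$, where $\tilde h(x)=(1-\alpha)\int_x^{x+1}|y|^{-\alpha}(p\,\one(y\ge 0)+q\,\one(y<0))\,dy$, so for $k$ comparable to $n$ (hence $b_n\Psi_k/a_n\to\infty$) the sum is asymptotic to the Riemann integral $k\int\Lambda((b_n\Psi_k/a_n)\,t\,\tilde h(x))\,dx$. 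Balanced regular variation of $\Lambda$ with exponent $\beta$ and homogeneity of $\Lambda^h$ then give
\[
\Lambda\!\left(\tfrac{b_n\Psi_k}{a_n}\,t\,\tilde h(x)\right)\sim \tau\!\left(\tfrac{b_n\Psi_k}{a_n}\right)\tilde h(x)^\beta\,\Lambda^h(t),
\]
and $\int \tilde h(x)^\beta\,dx=C_{\alpha,\beta}$ by the very definition \eqref{e:C}. Recalling $b_n/a_n\sim c_n$ and $b_n\sim n\tau(c_n\Psi_n)$ under $R4$, this yields
\[
\frac{1}{b_n}\sum_{m}\Lambda\!\left(\tfrac{b_n}{a_n}\,t\,\phi_{m,k}\right)\sim \frac{k\,\tau(c_n\Psi_k)}{n\,\tau(c_n\Psi_n)}\,C_{\alpha,\beta}\,\Lambda^h(t).
\]

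Next I would take the supremum over $k\le\theta n$. Karamata's theorem gives $\Psi_k/\Psi_n\to(k/n)^{1-\alpha}$, and regular variation of $\tau$ of index $\beta$ gives $\tau(c_n\Psi_k)/\tau(c_n\Psi_n)\to(\Psi_k/\Psi_n)^\beta$; combining, the envelope
\[
\frac{k\,\tau(c_n\Psi_k)}{n\,\tau(c_n\Psi_n)}\le (1+o(1))\Bigl(\tfrac{k}{n}\Bigr)^{1+\beta(1-\alpha)}
\]
is monotone increasing in $k/n$ (since $1+\beta(1-\alpha)>0$), hence maximized at $k=\theta n$, giving the required bound $\theta^{1+(1-\alpha)\beta}\,C_{\alpha,\beta}\,\Lambda^h(t)$.

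The main obstacle is making the preceding analysis uniform in $k\in[1,\theta n]$, since the Riemann approximation of $\sum_m \Lambda(\cdot)$ and the balanced regular variation of $\Lambda$ are only formally valid for $k$ large. I would handle this by splitting at some $k_0$: for $k\le k_0$ only finitely many terms are significant, so the contribution is at most a constant times $\tau(b_n/a_n)\,\Lambda^h(t)\sum_m |\phi_{m,k}|^\beta$, and dividing by $b_n\sim n\tau(c_n\Psi_n)$ yields something of order $\tau(c_n)/(n\tau(c_n\Psi_n))\sim n^{-1}\Psi_n^{-\beta}$, which vanishes as $n\to\infty$; for $k_0<k\le\theta n$, uniform Potter bounds on $\psi$, $\Psi$, and $\tau$ make the approximations above uniform on this range, and the monotonicity in $k/n$ closes the argument. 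Letting $k_0\to\infty$ after $n\to\infty$ completes the bound.
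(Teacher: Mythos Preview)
Your approach is essentially correct, but it is considerably more laborious than the paper's, and the uniformity you flag as the ``main obstacle'' is precisely what the paper circumvents by a simple monotonicity trick.

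The paper does not attempt to control the asymptotic of $\frac{1}{b_n}\sum_m\Lambda\bigl(\frac{b_n}{a_n}t\phi_{m,k}\bigr)$ uniformly in $k\le\theta n$. Instead it observes that, under \eqref{longmemory} with $\alpha<1$, one has $\phi_{i,k}>0$ for all $i\in\mathbb{Z}$ once $k\ge N$ for some fixed $N$. Since $\Lambda$ is convex with $\Lambda(0)=0$, it is nondecreasing along rays from the origin; therefore
\[
\Lambda\Bigl(\tfrac{b_n}{a_n}t\,\phi_{i,k}\Bigr)\le \Lambda\Bigl(\tfrac{b_n}{a_n}t\,|\phi|_{i,k}\Bigr),\qquad |\phi|_{i,k}:=|\phi_{i+1}|+\cdots+|\phi_{i+k}|.
\]
The crucial point is that $|\phi|_{i,k}$ is trivially monotone nondecreasing in $k$, so the supremum over $N\le k\le\theta n$ of $\sum_i\Lambda\bigl(\frac{b_n}{a_n}t\,|\phi|_{i,k}\bigr)$ is simply the value at $k=[\theta n]$. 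One then applies (a trivial modification of) Lemma~3.6 of \cite{ghosh:samorodnitsky:2009} to the sequence $(|\phi_i|)$---which is itself balanced regularly varying with the same $\psi$, $p$, $q$---to get exactly $\theta^{1+(1-\alpha)\beta}C_{\alpha,\beta}\Lambda^h(t)$. The finitely many $k<N$ are handled by the obvious observation that $\sup_{k<N}\log E[\exp\{\frac{b_n}{a_n}tS_k\}]=o(b_n)$.

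In contrast, your route computes the $k$-dependent asymptotic $\frac{k\,\tau(c_n\Psi_k)}{n\,\tau(c_n\Psi_n)}C_{\alpha,\beta}\Lambda^h(t)$ and then argues monotonicity of the \emph{limiting} envelope $(k/n)^{1+\beta(1-\alpha)}$; pushing this through requires uniform Potter bounds on $\psi$, $\Psi$, $\tau$ and uniform control of the Riemann-sum error over $k\in[k_0,\theta n]$, as you note. That is doable but genuinely heavier. The paper's device---pass to absolute values of the coefficients so that the pre-limit quantity itself is monotone in $k$---buys you the supremum for free and reduces everything to a single application of the existing lemma.

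Your observation that the right-hand side should be $\theta^{1+(1-\alpha)\beta}$ rather than $u^{1+(1-\alpha)\beta}$ is correct; it is indeed a typographical slip in the statement.
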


\begin{proof} 
Observe that since
 the coefficients satisfy \eqref{longmemory}, there is $ N\ge 1$ such
 that $ \phi_{ i,n}>0$ for all $ i\in \mathbb{Z}$ and $ 
 n\ge N$. Using the fact that $ \Lambda (t)$ is increasing  along each
 ray emanating from the origin, we see that, if $ n\ge N/\theta$, 
\begin{eqnarray*}
	 \sup_{N\le k\le \theta n }\log E \Big[\exp\Big\{ \frac{ b_{
               n}}{a_{ n} }tS_{ k} \Big\} \Big] 
	 & = & \sup_{N\le k\le \theta n }\sum_{ i\in \mathbb{Z}} \Lambda
         \Big( t \frac{ b_{ n}}{a_{ n} } \phi_{ i,k} \Big) \\ 
	 & \le & \sup_{N\le k\le \theta n } \sum_{ i\in \mathbb{Z}}
         \Lambda \Big( t \frac{ b_{ n}}{a_{ n} } \big| \phi \big|_{
           i,k}  \Big) \\ 
	 & = &  \sum_{ i\in \mathbb{Z}} \Lambda \Big( t \frac{ b_{
             n}}{a_{ n} } \big| \phi \big|_{ i,[ \theta n]}  \Big), 
\end{eqnarray*}
where $ | \phi |_{ i,n}= | \phi_{ i+1} |+\cdots+ | \phi_{ i+n} |
$. Clearly, the sequence  $ (| \phi_{ i} |) $ is also balanced
regular varying and satisfies  
 \[ 
 	\frac{ | \phi_{ n} |  }{\psi(n) } \to p \mbox{ and } \frac{ |
          \phi_{ -n} |  }{ \psi(n) } \to q \ \ \ \ \mbox{ as } n\to
        \infty ,    
\] 
where $ \psi(\cdot)$ is as in \eqref{longmemory}.
With a minor modification of the proof of Lemma 3.6 in
\cite{ghosh:samorodnitsky:2009} we obtain, for any $ t\in\bbr^d$ and
$\theta>0$, 
\[ 
 	\lim_{ n \rightarrow \infty   }   \frac{ 1}{b_{ n} } \sum_{
          i\in \mathbb{Z}} \Lambda \Big( t \frac{ b_{ n}}{a_{ n} }
        \big| \phi \big|_{ i,[ un]}  \Big) =  u^{ 1+(1-\alpha )\beta }C_{ \alpha,\beta
        } \Lambda^h(t). 
\]
Since it is also easy to see that 
\begin{eqnarray*}
	\lim_{ n \rightarrow \infty   } \frac{ 1}{ b_{ n} } \sup_{
          k\le N} \log E \Big[  \exp\Big(  \frac{ b_{ n}}{a_{ n} }
          tS_{ k}\Big)  \Big]&=&0 \,,
\end{eqnarray*}
the proof is complete. 
\end{proof}

\end{section}

\begin{section}{Appendix} \label{sec:appendix}

In this section we state certain straightforward multivariate analogs
of the ruin probability estimates of \cite{nyrhinen:1994}. For
completeness we provide the argument.

Let $\bigl(Y_n,\, n\geq 1\bigr)$ be an $\bbr^d$-valued stochastic process. For
$n=1,2,\ldots$ and $t\in\bbr^d$ define $g_n(t) = n^{-1}\log Ee^{tY_n}$
and 
\begin{equation} \label{e:lim.trans}
g(t) = \limsup_{n\to\infty} g_n(t), \ t\in\bbr^d
\end{equation}
(these functions may  take the value $+\infty$). Let
$A\subset\bbr^d$ be a Borel set, and define 
\begin{equation} \label{e:sets}
{\mathcal C} = \bigl\{ t\in\bbr^d:\, \inf_{\gamma\in
  A}t\gamma>  0\bigr\}, \ \ 
{\mathcal D} = \bigl\{ t\in {\mathcal C}:\, \sup_{n\geq
  1}Ee^{tY_n}<\infty\bigr\}\,, 
\end{equation}
and
\begin{equation} \label{e:sets.E}
{\mathcal E} = \bigl\{ t\in\bbr^d:\, g \ \text{is finite in a
  neighborhood of $t$, exists as a limit at $t$,}
\end{equation}
$$
\text{and is   differentiable at $t$}\bigr\}, \ \ {\mathcal F} = \bigl\{ t\in
     {\mathcal E}:\,  \rho\, \nabla g(t)\in A^\circ\ \text{for some
       $\rho>0$}\bigr\}.
$$
\begin{theorem} \label{t:nyrh3.1}
(i) \ Suppose that there is $t_0\in {\mathcal C}$ such that $g(t_0)<0$. Then
$$
\limsup_{u\to\infty} \frac1u \log P\bigl( Y_n\in uA \ \ \text{for some
  $n=1,2,\ldots$}\bigr) \leq -\sup_{t\in {\mathcal D}}\inf_{\gamma\in A}t\gamma\,.
$$

(ii) \ For $t\in {\mathcal F}$, let $\eta(t) = \inf\{\eta>0:\, \eta\,
\nabla g(t)\in A^\circ\}$. Then
$$
\liminf_{u\to\infty} \frac1u \log P\bigl( Y_n\in uA \ \ \text{for some
  $n=1,2,\ldots$}\bigr) \geq \sup_{t\in {\mathcal F}}\eta(t)\bigl[
  g(t)-t\,\nabla g(t)\bigr]\,.
$$
\end{theorem}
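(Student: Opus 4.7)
For part (i), I would combine a union bound with Chernoff's inequality and a dichotomy on the timescale. For any $t\in\bbr^d$ and $u>0$,
\[
P(Y_n\in uA)\le E[e^{tY_n}]\,e^{-u\inf_{\gamma\in A}t\gamma}.
\]
Fix $t\in\mathcal{D}$ and $t_0\in\mathcal{C}$ with $g(t_0)<0$, and pick $\vep>0$ small enough that $g(t_0)+\vep<0$. For a parameter $\alpha>0$ to be sent to infinity, I would split the union bound at $n=\lfloor\alpha u\rfloor$. On $\{n\le\alpha u\}$ I would use the tilt $t$: since $\sup_n E[e^{tY_n}]<\infty$, the contribution is at most of order $\alpha u\cdot e^{-u\inf_\gamma t\gamma}$. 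On $\{n>\alpha u\}$ I would use $t_0$: since $g_n(t_0)<g(t_0)+\vep$ eventually, $E[e^{t_0Y_n}]\le e^{n(g(t_0)+\vep)}$, and the geometric tail is dominated by a multiple of $e^{\alpha u(g(t_0)+\vep)-u\inf_\gamma t_0\gamma}$. Dividing by $u$, taking $\limsup$, and letting $\alpha\to\infty$ (which drives the second rate to $-\infty$ because $g(t_0)+\vep<0$) leaves $-\inf_\gamma t\gamma$; a supremum over $t\in\mathcal{D}$ then gives the claim.

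For part (ii), fix $t\in\mathcal{F}$ and choose $\eta$ slightly larger than $\eta(t)$, so that $\eta\nabla g(t)\in A^\circ$. Pick $\delta>0$ with $B_{\delta}(\eta\nabla g(t))\subset A$ and set $n(u)=\lfloor u\eta\rfloor$. Since $u/n(u)\to 1/\eta$, for $u$ large the event $\{Y_{n(u)}/n(u)\in B_{\delta/(2\eta)}(\nabla g(t))\}$ sits inside $\{Y_{n(u)}\in uA\}$. The central step is a one-point Gärtner--Ellis lower bound: because $t\in\mathcal{E}$ ensures that $g$ is finite in a neighborhood of $t$, exists as a limit at $t$, and is differentiable at $t$,
\[
\liminf_{n\to\infty}\frac{1}{n}\log P\bigl(Y_n/n\in B_{\delta/(2\eta)}(\nabla g(t))\bigr)\ge g(t)-t\nabla g(t).
\]
I would prove this by exponential tilting: define $Q_n$ with density $dQ_n/dP=e^{tY_n-ng_n(t)}$, show that under $Q_n$ one has $Y_n/n\to\nabla g(t)$ in probability, and then invert the likelihood ratio and bound $tY_n$ from above on the event of interest. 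Multiplying the rate by $n(u)/u\to\eta$, letting $\eta\downarrow\eta(t)$, and taking a supremum over $t\in\mathcal{F}$ delivers the lower bound.

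The main technical hurdle will be this Gärtner--Ellis one-point lower bound for a generic (non-i.i.d.) sequence $(Y_n)$: the concentration of $Y_n/n$ at $\nabla g(t)$ under $Q_n$ is a weak law of large numbers whose only inputs are the conditions built into $t\in\mathcal{E}$. Concretely, for any unit vector $e$ and any $\vep>0$, applying Chebyshev under $Q_n$ at the perturbed tilts $t\pm\vep e$ and using differentiability of $g$ at $t$ together with local finiteness of $g$ controls both $Q_n(e\cdot Y_n/n>e\cdot\nabla g(t)+\vep')$ and the corresponding lower deviation, which yields the required concentration. Once this lemma is in hand, the rest of both parts reduces to the bookkeeping sketched above.
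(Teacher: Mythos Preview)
Your proposal is correct and follows essentially the same route as the paper: the same Chernoff/union-bound split at a multiple of $u$ for part (i), and the same exponential tilt plus weak law of large numbers under the tilted measure (proved via coordinate-wise exponential Chebyshev at perturbed parameters $t\pm re_i$) for part (ii). The only cosmetic difference is that the paper phrases the tilted measure via an auxiliary variable $Z_n$ rather than a change of measure $Q_n$, and in part (i) it drops the extra factor $e^{-u\inf_\gamma t_0\gamma}$ from the large-$n$ piece, but these are immaterial.
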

\begin{proof}
(i) \ 
For $n=1,2,\ldots$ let $t\in \bbr^d$ be such that $g_n(t)<\infty$. Let
$Z_n$ be an $\bbr^d$-valued random vector such that
$$
P(Z_n\in B) = e^{-ng_n(t)}E\Bigl[ e^{tY_n}\one\bigl( Y_n\in
  nB\bigr)\Bigr],\ \text{$B\subseteq\bbr^d$ a Borel set.}
$$
Then
\begin{equation} \label{e:bound.n}
P(Y_n\in uA) = e^{ng_n(t)}E\Bigl[ e^{-ntZ_n}\one\Bigl( Z_n\in
  un^{-1}A\Bigr)\Bigr]
\leq \exp\bigl\{ ng_n(t) - u\inf_{\gamma\in A}t\gamma\bigr\}\,.
\end{equation} 
Fix $M=1,2,\ldots$. Using \eqref{e:bound.n} for $n\leq Mu$ and $t\in
{\mathcal D}$ gives us
$$
\sum_{n\leq Mu} P(Y_n\in uA) \leq (Mu)\, \sup_{n\geq  1}E^{tY_n}
\exp\bigl\{ - u\inf_{\gamma\in A}t\gamma\bigr\}\,.
$$
Taking a limit and optimizing over $t\in {\mathcal D}$ we obtain 
\begin{equation} \label{e:bound.s}
\limsup_{u\to\infty}u^{-1} \log\left( \sum_{n\leq Mu} P(Y_n\in
uA)\right) \leq -\sup_{t\in {\mathcal D}}\inf_{\gamma\in A}t\gamma\,.
\end{equation}
Next, using \eqref{e:bound.n} for $n>Mu$ and $t_0$ in the statement of
the theorem (which is possible for $u$ large enough) gives us for
large $u$ 
$$
\sum_{n> Mu} P(Y_n\in uA)\leq \sum_{n> Mu} e^{-\alpha n} \leq
Ce^{-\alpha Mu}\,,
$$
where $\alpha\in \bigl(g(t_0),0\bigr)$ and $C>0$ a constant. Therefore,
\begin{equation} \label{e:bound.l}
\limsup_{u\to\infty}u^{-1} \log\left( \sum_{n> Mu} P(Y_n\in
uA)\right) \leq -\alpha M\,.
\end{equation}
Combining \eqref{e:bound.s} with \eqref{e:bound.l} and letting
$M\to\infty$ we obtain the statement of part (i) of the theorem. 

For part (ii), let $t\in {\mathcal F}$, and let $\eta>0$ be such that
$\eta\, \nabla g(t)\in A^\circ$. Choose $\vep>0$ so that the
open ball $B\bigl(\eta\, \nabla g(t) ,\vep)$ lies completely within
$A$. Then for $u$ large 
enough, 
$$
P( Y_{[u\eta]}\in uA) \geq P\Bigl( Y_{[u\eta]}\in uB\bigl(\eta\,
\nabla g(t) ,\vep\bigr)\Bigr) 
\geq P\left( \frac{Y_{[u\eta]}}{[u\eta]} \in B\bigl(  \nabla g(t),
\vep/(2\eta)\bigr)\right)\,.
$$
On the other hand, for any $t\in {\mathcal E}$ and $\vep>0$, for all
$n$ large enough so that $g_n(t)<\infty$, we have
$$
P\Bigl( Y_n \in nB\bigl(  \nabla g(t), \vep\bigr)\Bigr)
=e^{ng_n(t)}E\Bigl[ e^{-ntZ_n}\one\Bigl( Z_n\in B\bigl(  \nabla g(t),
  \vep\bigr)\Bigr)\Bigr] 
$$
$$
\geq \exp\bigl\{ ng_n(t) - nt\nabla g(t) -
n\vep\| t\|\bigr\} P\Bigl( Z_n\in B\bigl(  \nabla g(t),
  \vep\bigr)\Bigr)\,,
$$
so that
$$
\liminf_{n\to\infty}n^{-1}\log P\Bigl( Y_n \in nB\bigl(  \nabla g(t),
\vep\bigr)\Bigr) 
$$
$$
\geq g(t) - t\nabla g(t) - \vep\| t\| + 
\liminf_{n\to\infty} n^{-1}\log P\Bigl( Z_n \in B\bigl(  \nabla g(t),
\vep\bigr)\Bigr)
$$
$$
=  g(t) - t\nabla g(t) - \vep\| t\|\,,
$$
since, as  is shown below, the last lower limit is equal to
zero. Therefore, for any $t\in {\mathcal F}$, $\eta>0$ as above  and
$\vep>0$ small enough, 
$$
\liminf_{u\to\infty} \frac1u \log P\bigl( Y_n\in uA \ \ \text{for some
  $n=1,2,\ldots$}\bigr)
$$
$$
\geq \liminf_{u\to\infty} \frac1u \log P( Y_{[u\eta]}\in uA) 
\geq \eta \bigl[ g(t) - t\nabla g(t) - \vep\| t\|\bigr]\,.
$$
Letting $\vep\to 0$, $\eta\to \eta(t)$, and optimizing over $t\in
{\mathcal F}$, we obtain the claim of part (ii) of the theorem. 

The proof of the theorem will be finished once we show that for every
$t\in {\mathcal E}$ and $\vep>0$, $P\bigl( Z_n \in B\bigl(  \nabla g(t),
\vep\bigr)\bigr)\to 1$ as $n\to\infty$. To this end, let $e_i$ be the
$i$th coordinate unit vector in $\bbr^d$, $i=1,\ldots, d$. Then
\begin{eqnarray*}
P\Bigl( Z_n \notin B\bigl(  \nabla g(t), \vep\bigr)\Bigr)
&\leq &\sum_{i=1}^d P\left( Z_ne_i\geq \frac{\partial g}{\partial
  y_i}(t) + \frac{\vep}{d}\right)\\
 &+& \sum_{ i=1}^{ d}P\left( Z_ne_i\leq \frac{\partial g}{\partial
  y_i}(t) - \frac{\vep}{d}\right)\,.
\end{eqnarray*}
Fix $i=1,\ldots, d$, and choose $r>0$ so small that
$g(t+re_i)<\infty$. Then $g_n(t+re_i)<\infty$ for all $n$ large
enough, and for such $n$ we have 
$$
P\left( Z_ne_i\geq \frac{\partial g}{\partial
  y_i}(t) + \frac{\vep}{d}\right)
= e^{-ng_n(t)}E\left[ \one\Bigl( Y_ne_i\geq n\frac{\partial g}{\partial
  y_i}(t) + n\frac{\vep}{d}\Bigr) e^{tY_n}\right]
$$
$$
\leq \exp\Bigl\{-ng_n(t) -rne_i\nabla g(t) - rn\vep/d\Bigr\}
E\left[ \one\Bigl( Y_ne_i\geq n\frac{\partial g}{\partial
  y_i}(t) + n\frac{\vep}{d}\Bigr) e^{(t+re_i)Y_n}\right]
$$
$$
\leq  \exp\Bigl\{ n\Bigl( g_n(t +re_i)-g_n(t)- re_i\nabla g(t)
-r\vep/d\Bigr)\Bigr\}\,.
$$
Therefore,
$$
\limsup_{n\to\infty} \frac1n \log P\left( Z_ne_i\geq \frac{\partial
  g}{\partial   y_i}(t) + \frac{\vep}{d}\right)
\leq g(t +re_i)-g(t)- re_i\nabla g(t) -r\vep/d\,.
$$
Since 
$$
g(t +re_i)-g(t)- re_i\nabla g(t) = o(r) \ \ \text{as $r\downarrow 0$,}
$$
this expression is negative for $r$ small enough, and so 
$$
P\left( Z_ne_i\geq \frac{\partial g}{\partial
  y_i}(t) + \frac{\vep}{d}\right) \to 0
$$
as $n\to\infty$ for every $i=1,\ldots, d$. A similar argument gives us 
\[ 
 	P\left( Z_ne_i\leq \frac{\partial g}{\partial
  y_i}(t) - \frac{\vep}{d}\right) \to 0   
\] as $n\to\infty$ for every $i=1,\ldots, d$ and the proof of the
theorem is complete. 
\end{proof}

\bigskip

\textbf{Acknowledgement.} We thank the anonymous referee for his/her helpful comments.

\end{section}

\bibliographystyle{elsart-harv}
%\bibliography{/Users/Souvik/Documents/Miscellanea/Latex/bibfile}
\bibliography{c:/GenaFiles/texfiles/bibfile}

\begin{thebibliography}{15}
\expandafter\ifx\csname natexlab\endcsname\relax\def\natexlab#1{#1}\fi
\expandafter\ifx\csname url\endcsname\relax
  \def\url#1{\texttt{#1}}\fi
\expandafter\ifx\csname urlprefix\endcsname\relax\def\urlprefix{URL }\fi

\bibitem[{Arratia et~al.(1990)Arratia, Gordon, and
  Waterman}]{Arratia:1990p6736}
Arratia, R., Gordon, L., Waterman, M.~S., 1990. The {E}rdos-{R}{\'e}nyi law in
  distribution, for coin tossing and sequence matching. The Annals of
  Statistics 18~(2), 539--570.

\bibitem[{Asmussen(2003)}]{asmussen:2003}
Asmussen, S., 2003. Applied Probability and Queues, 2nd Edition. Springer, New
  York.

\bibitem[{Barbe and McCormick(2008)}]{barbe2008elf}
Barbe, P. and McCormick, W.P., 2008. An extension of a logarithmic form of {C}ramer's ruin theorem to some {FARIMA} and related processes. http://arxiv.org/pdf/0811.3460.

\bibitem[{Bingham et~al.(1987)Bingham, Goldie, and
  Teugels}]{bingham:goldie:teugels:1987}
Bingham, N., Goldie, C., Teugels, J., 1987. Regular Variation. Cambridge
  University Press, Cambridge.

\bibitem[{Brockwell and Davis(1991)}]{brockwell:davis:1991}
Brockwell, P., Davis, R., 1991. Time Series: Theory and Methods, 2nd Edition.
  Springer-Verlag, New York.

\bibitem[{Dembo and Zeitouni(1998)}]{dembo:zeitouni:1998}
Dembo, A., Zeitouni, O., 1998. Large Deviations Techniques and Applications,
  2nd Edition. Springer-Verlag, New York.

\bibitem[{Erd{\"o}s and R\'enyi(1970)}]{ErdRen:1970kx}
Erd{\"o}s, P., R\'enyi, A., 1970. On a new law of large numbers. Journal
  d'Analyse Math{\'e}matique 23~(1), 103--111.


\bibitem[{Gantert(1998)}]{gantert1998functional}
Gantert, N., 1998. Functional {E}rdos-{R}enyi laws for semiexponential random
  variables. The Annals of Probability 26~(3), 1356--1369.

\bibitem[{Gerber(1982)}]{gerber:1982}
Gerber, H., 1982. Ruin theory in linear models. Insurance: Mathematics and
  Economics 1, 177--184.

\bibitem[{Ghosh and Samorodnitsky(2009)}]{ghosh:samorodnitsky:2009}
Ghosh, S., Samorodnitsky, G., 2009. Large deviations and memory in moving
  average processes. Stochastic Processes and Their Applications 119, 534--561.
  
  \bibitem[{Gordon et~al.(1986)Gordon, Schilling, and
  Waterman}]{Gordon:1986p6959}
Gordon, L., Schilling, M.~F., Waterman, M.~S., 1986. An extreme value theory
  for long head runs. Probability Theory and Related Fields 72~(2), 279--287.

\bibitem[{H\"usler and Piterbarg(2004)}]{husler:piterbarg;2004}
H\"usler, J., Piterbarg, V., 2004. On the ruin probability for physical
  fractional brownian motion. Stochastic Processes and their Applications 113,
  315--332.

\bibitem[{H\"usler and Piterbarg(2008)}]{husler:piterbarg;2008}
H\"usler, J., Piterbarg, V., 2008. A limit theorem for the time of ruin in a
  gaussian ruin problem. Stochastic Processes and their Applications 118,
  2014--2021.

\bibitem[{Mansfield et~al.(2001)Mansfield, Rachev, and
  Samorodnitsky}]{mansfield:rachev:samorodnitsky:2001}
Mansfield, P., Rachev, S., Samorodnitsky, G., 2001. Long strange segments of a
  stochastic process and long range dependence. Annals of Applied Probability
  11, 878--921.

\bibitem[{Novak(1992)}]{Novak:1992p6961}
Novak, S.~Y., 1992. Longest runs in a sequence of {\$}m{\$}-dependent random
  variables. Probability Theory and Related Fields 91~(3-4), 269--281.

\bibitem[{Nyrhinen(1994)}]{nyrhinen:1994}
Nyrhinen, H., 1994. Rough limit results for level-crossing probabilities.
  Journal of Applied Probability 31, 373--382.

\bibitem[{Nyrhinen(1995)}]{nyrhinen:1995}
Nyrhinen, H., 1995. On the typical level crossing time and path. Stochastic
  Processes and their Applications 58, 121--137.

\bibitem[{Promislow(1991)}]{promislow:1991}
Promislow, S., 1991. The probability of ruin in a process with dependent
  increments. Insurance: Mathematics and Economics 10, 99--107.

\bibitem[{Rachev and Samorodnitsky(2001)}]{rachev:samorodnitsky:2001}
Rachev, S., Samorodnitsky, G., 2001. Long strange segments in a long range
  dependent moving average. Stochastic Processes and Their Applications 93,
  119--148.

\bibitem[{Resnick(1987)}]{resnick:1987}
Resnick, S., 1987. Extreme Values, Regular Variation and Point Processes.
  Springer-Verlag, New York.

\bibitem[{Samorodnitsky(2006)}]{samorodnitsky:2006LRD}
Samorodnitsky, G., 2006. Long range Dependence. Vol. 1:3 of Foundations and
  Trends in Stochastic Systems. Now Publishers, Boston.

\bibitem[{Vaggelatou(2003)}]{Vaggelatou:2003p6942}
Vaggelatou, E., 2003. On the length of the longest run in a multi-state
  {M}arkov chain. Statistics and Probability Letters 62~(3), 211--221.






\end{thebibliography}

\end{document}